\newtheorem{theorem}{Theorem}[section]
\newtheorem{lemma}[theorem]{Lemma}
\newtheorem{example}[theorem]{Example}
\def\stab{{\rm Stab}}
\def\aut{{\rm Aut}}
\begin{document}

\title[Latin bitrades derived from quasigroup autoparatopisms]{Latin bitrades derived from quasigroup autoparatopisms}


\author[1]{\fnm{Nicholas J.} \sur{Cavenagh}}\email{nickc@waikato.ac.nz}

\author*[2]{\fnm{Ra\'ul M.} \sur{Falc\'on}}\email{rafalgan@us.es}
\equalcont{These authors contributed equally to this work.}

\affil*[1]{\orgdiv{Department of Mathematics}, \orgname{The University of Waikato}, \orgaddress{\street{Private Bag 3105}, \city{Hamilton}, \postcode{3240}, \country{New Zealand}}}

\affil[2]{\orgdiv{Department of Applied Mathematics I}, \orgname{Universidad de Sevilla}, \orgaddress{\street{Avenida Reina Mercedes 4 A}, \city{Sevilla}, \postcode{41012}, \country{Spain}}}

\abstract{In 2008, Cavenagh and Dr\'{a}pal, et al, described a method of constructing Latin trades using groups. 
The Latin trades that arise from this construction are entry-transitive (that is, there always exists an autoparatopism of the Latin trade mapping any ordered triple to any other ordered triple). Moreover, useful properties of the Latin trade can be established using properties of the group. However, the construction does not give a direct embedding of the Latin trade into any particular Latin square. In this paper, we generalize the above to construct Latin trades embedded in a Latin square $L$, via the autoparatopism group of the quasigroup with Cayley table $L$. We apply this theory to identify non-trivial entry-transitive trades in some group operation tables as well as in Latin squares that arise from quadratic orthomorphisms.}

\keywords{Latin square, quasigroup, Latin trade, autoparatopism group, automorphism group}

\pacs[MSC Classification]{05B15, 05B30}

\maketitle

\section{Introduction}
\label{Section:Intro}

A {\em partial Latin square} of order $n$ is an $n\times n$ array $P=(P[i,j])$, whose cells contain symbols in a finite set $Q$ of $n$ distinct symbols, so that each symbol appears at most once per row, and at most once per column. Those cells containing no symbol are called {\em empty}.  The number of non-empty cells in $P$ is its {\em size}, which we denote by $|P|$. If $|P|=n^2$, then the array is a {\em Latin square} of order $n$. Furthermore, the partial Latin square $P$ is uniquely identified with its {\em set of entries} $\mathrm{Ent}(P):=\left\{(i,j,P[i,j])\mid i,j,P[i,j]\in Q\right\}$. 
Observe that any two distinct entries of a partial Latin square agree in at most one coordinate. 
If $\mathrm{Ent}(P')\subseteq \mathrm{Ent}(P)$, for some other partial Latin square $P'$ of the same order, then it is said that $P'$ is {\em embedded} in $P$. This is denoted by $P'\subseteq P$.

A {\em Latin trade} in a Latin square $L$ is any non-empty partial Latin square $T\subseteq L$ such that there is another partial Latin square $T'$ of the same order satisfying the following three conditions: (1) $|T|=|T'|$; (2)  $\mathrm{Ent}(T)\cap\mathrm{Ent}(T')=\emptyset$; and (3) the set $\left(\mathrm{Ent}(L)\setminus \mathrm{Ent}(T)\right)\cup \mathrm{Ent}(T')$ is the set of entries of a new Latin square. 
Condition (3) above is equivalent to the following: ($3^{\ast}$) corresponding rows and columns of $T$ and $T'$ contain the same set of symbols. 
Conditions (1), (2) and ($3^{\ast}$) above allow the definition of a Latin trade as a partial Latin square that is not embedded in any particular Latin square. 

The partial Latin square $T'$ is said to be a {\em disjoint mate} of $T$, and the pair $(T,T')$ is called a {\em Latin bitrade} of $L$ of size $|T|$. 
It is said to be {\em primary} if, whenever $(U,U')$ is also a Latin bitrade in $L$ such that $U\subseteq T$ and $U'\subseteq T'$, then $U= T$ and $U'=T'$. 
Furthermore, the Latin trade $T$ is said to be {\em minimal} if, whenever $(U,U')$ is a Latin bitrade in $L$ such that $U\subseteq T$, then $U=T$. Finally, the Latin bitrade $(T,T')$ is said to be {\em orthogonal} if, whenever 
 $T[i,j]=T[i',j']$, with $i\neq i'$ and $j\neq j'$, then $T'[i,j]\neq T'[i',j']$.

Observe that if $L$ and $L'$ are distinct Latin squares of the same order, then $\mathrm{Ent}(L)\setminus \mathrm{Ent}(L')$ is a Latin trade with disjoint mate given by $\mathrm{Ent}(L')\setminus \mathrm{Ent}(L)$. Thus the study of Latin trades in Latin squares is equivalent to the study of the difference between Latin squares, sometimes known as the {\em Hamming distance} (\cite{CR, D1}). In particular, the minimum Hamming distance between group operation tables and other Latin squares has been a much studied topic 
\cite{BM1, CW, D2, DK, V}. Other applications of Latin trades include defining sets and randomization; see \cite{Cavenagh2008} for a survey.   

The Latin trade $T$ is said to be {\em $k$-homogeneous} if the set $\mathrm{Ent}(T)$ intersects each row, each column and each symbol in $L$ either zero or $k$ times. There exist  $3$-homogeneous Latin trades embedded in the Cayley table of the elementary abelian $2$-group $(\mathbb{Z}_2)^{2n}$, for all $n>1$  \cite{Cavenagh2004}. Particular constructions of $3$- and $4$-homogeneous Latin trades are described in \cite{Cavenagh2005,Cavenagh2005a}, while the existence of $k$-homogeneous Latin trades of certain sizes has been dealt with in \cite{Bean2005, Behrooz2011}.

In 2008, Cavenagh et al. \cite{CDH} described a method to construct Latin bitrades via certain finite groups. These Latin bitrades satisfy (1), (2) and ($3^{\ast}$) above and thus are not embedded in any particular Latin square. 
More specifically, rows, columns and entries are labelled as cosets of subgroups of the group under consideration. 
The Latin bitrades constructed in the following result are also {\em entry-transitive}. (We define entry-transitive in the next section.) 

\begin{theorem} {\rm (\cite[Theorem 2.14 and Lemma 3.2]{CDH})}  \label{drapl}
Let $G$ be a finite group with unit element $1$ and three elements $a,b,c\in G\setminus\{1\}$ such that  
\begin{itemize}
    \item[{\rm (G1)}]  $abc = 1$, and
    \item[{\rm (G2)}] $|<a> \cap <b>| = |<a> \cap <c>| = |<b> \cap <c>| = 1$.
\end{itemize}
Let $A=<a>$, $B=<b>$ and $C=<c>$. Then, the pair of partial Latin squares $(T^\circ,\,T^*)$, with respective set of entries 

\[\mathrm{Ent}(T^{\circ}):= \left\{\left(gA, gB, gC\right) \mid g \in G\right\}\hspace{0.5cm} \text{ and }\hspace{0.5cm}  \mathrm{Ent}(T^{\ast}):= \{(gA, gB, ga^{-1}C)\mid g \in G\},\]
is a Latin bitrade of size $|G|$, with $|G : A|$ rows (each with $|A|$ entries), $|G : B|$ columns (each with $|B|$ entries), and $|G : C|$ entries (each occurring $|C|$ times). It is primary, whenever $G=<a, b, c>$. It is orthogonal, whenever $|C\cap aCa^{-1}|=1$. 
\end{theorem}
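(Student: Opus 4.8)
The plan is to establish, in order: that $(T^{\circ},T^{\ast})$ is a pair of partial Latin squares with the stated line counts and multiplicities; that it is a Latin bitrade; that it is primary when $G=\langle a,b,c\rangle$; and that it is orthogonal when $|C\cap aCa^{-1}|=1$. First I would record the consequences of (G1) and (G2). From $abc=1$ one gets $a^{-1}=bc$, $ab=c^{-1}$ and $b=a^{-1}c^{-1}$; hence $ga^{-1}C=g(bc)C=gbC$, so $\mathrm{Ent}(T^{\ast})=\{(gA,gB,gbC)\mid g\in G\}$, and $bCb^{-1}=a^{-1}Ca$. From (G2), $A\cap B=A\cap C=B\cap C=\{1\}$, and conjugating, $A\cap bCb^{-1}=a^{-1}(A\cap C)a=\{1\}$ and $B\cap bCb^{-1}=b(B\cap C)b^{-1}=\{1\}$. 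The key point is that $g\mapsto(gA,gB)$ is injective (because $A\cap B=\{1\}$); consequently each of $T^{\circ}$ and $T^{\ast}$ has exactly $|G|$ entries, indexed bijectively by $G$, and two of them agreeing in two coordinates would force $g^{-1}g'$ into the trivial intersection of two of the three (possibly conjugated) subgroups — so both are genuine partial Latin squares. Counting then shows that row $gA$ has the $|A|$ entries indexed by $gA$ (distinct since $A\cap B=\{1\}$), that there are $|G:A|$ rows, and symmetrically for the $|G:B|$ columns and the $|G:C|$ symbols.

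Next I would check the bitrade conditions. Conditions (1) and (2) are immediate: $|T^{\circ}|=|T^{\ast}|=|G|$, and a common entry would force $g=g'$ and then $b\in C$, contradicting (G2). For $(3^{\ast})$ I would use that $T^{\circ}$ and $T^{\ast}$ occupy the same cells $\{(gA,gB)\mid g\in G\}$ and compare symbols line by line: in row $gA$ the symbols of $T^{\circ}$ are $\{g'C\mid g'\in gA\}$ and those of $T^{\ast}$ are $\{g'bC\mid g'\in gA\}$, and these coincide because $(g'a)bC=g'(ab)C=g'c^{-1}C=g'C$ while right multiplication by $a$ fixes the coset $gA$ setwise; for column $gB$, $(g'b^{-1})bC=g'C$ and right multiplication by $b^{-1}$ fixes $gB$, so the column symbol sets agree. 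Thus $(T^{\circ},T^{\ast})$ is a Latin bitrade.

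For primariness I would assume $G=\langle a,b,c\rangle$ and let $(U,U')$ be a Latin bitrade with $U\subseteq T^{\circ}$ and $U'\subseteq T^{\ast}$, writing $\mathrm{Ent}(U)=\{(gA,gB,gC)\mid g\in S\}$ and $\mathrm{Ent}(U')=\{(gA,gB,gbC)\mid g\in S'\}$ for uniquely determined $S,S'\subseteq G$. Since the components of a Latin bitrade occupy the same cells and $g\mapsto(gA,gB)$ is injective, $S=S'$. Matching the symbols of $U$ and $U'$ in each row $hA$ — via $g'C=(g'a)bC$ and the injectivity of $g'\mapsto g'bC$ on $hA$ — yields $(S\cap hA)a=S'\cap hA$, and taking the union over all cosets gives $Sa=S'=S$; the column analysis gives $Sb^{-1}=S'=S$. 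Hence $S$ is invariant under right multiplication by $\langle a,b\rangle=\langle a,b,c\rangle=G$, which forces $S=G$ (as $S\neq\emptyset$), i.e.\ $U=T^{\circ}$ and $U'=T^{\ast}$.

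For orthogonality I would assume $|C\cap aCa^{-1}|=1$. Conjugation by $a$ is an automorphism sending $C\cap a^{-1}Ca$ to $aCa^{-1}\cap C$, so $|C\cap bCb^{-1}|=|C\cap a^{-1}Ca|=1$, i.e.\ $C\cap bCb^{-1}=\{1\}$. Now if $T^{\circ}$ has the same symbol in cells $(gA,gB)$ and $(g'A,g'B)$ with $gA\neq g'A$ and $gB\neq g'B$, then $t:=g^{-1}g'\in C\setminus(A\cup B)$; the corresponding symbols of $T^{\ast}$, namely $gbC$ and $g'bC$, are equal iff $t\in bCb^{-1}$, hence iff $t\in C\cap bCb^{-1}=\{1\}$, which is impossible since $1\in A$ while $t\notin A$. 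So $T^{\ast}$ separates the two cells and $(T^{\circ},T^{\ast})$ is orthogonal. I expect the only real difficulty to be bookkeeping: keeping straight the conjugates (especially $a^{-1}Ca=bCb^{-1}$ and the equality $|C\cap aCa^{-1}|=|C\cap bCb^{-1}|$), and — most important for primariness — exploiting that a Latin bitrade has its two components on the same set of cells, since it is precisely this (with $A\cap B=\{1\}$) that yields $S=S'$ and lets the hypothesis $G=\langle a,b,c\rangle$ take effect; without it, $Sa=S'$ and $Sb^{-1}=S'$ only give that $S$ is a union of right cosets of $C$.
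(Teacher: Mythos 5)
Your argument is correct, but be aware that this paper contains no proof of the statement to compare against: Theorem \ref{drapl} is imported from \cite{CDH} (Theorem 2.14 and Lemma 3.2 there) and is only quoted here. Your direct coset verification is essentially the argument of \cite{CDH}, and it is also consistent with how the present paper later recovers the result as a special case: in Theorem \ref{proposition_aut} the roles of $A$, $B$, $C$ are played by $\mathrm{Stab}^{\mathrm{row}}_G(e)$, $\mathrm{Stab}^{\mathrm{col}}_G(e)$, $\mathrm{Stab}^{\mathrm{sym}}_G(e)$, the trade is the orbit of $e$, and your injectivity of $g\mapsto(gA,gB)$ becomes the count $|G|/|\mathrm{Stab}_G(e)|$; Theorem \ref{theorem_generalization} is exactly the dictionary between your coset picture and that orbit picture. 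The computational core of your write-up --- $ga^{-1}C=gbC$, $bCb^{-1}=a^{-1}Ca$, and the trivial intersections $A\cap bCb^{-1}=B\cap bCb^{-1}=\{1\}$ obtained by conjugating (G2) --- is right, and it correctly yields the partial Latin square property of $T^{\ast}$, the disjointness, the line counts, and (via right translation by $a$ on rows and $b^{-1}$ on columns) condition ($3^{\ast}$); the orthogonality step through $|C\cap bCb^{-1}|=|C\cap aCa^{-1}|=1$ is also correct. One point deserves emphasis: your primariness proof uses that the two components of a Latin bitrade occupy the same set of filled cells. That is part of the standard definition (and of the definition in \cite{CDH}), but it does \emph{not} follow from conditions (1), (2), ($3^{\ast}$) as literally stated in Section \ref{Section:Intro}; indeed, as your own closing remark anticipates, taking $S$ to be a single coset $gC$ and $S'=gCa$ produces a pair satisfying (1), (2), ($3^{\ast}$) inside $(T^{\circ},T^{\ast})$ with $U\subsetneq T^{\circ}$ whenever $C\neq G$, so the same-cells requirement is what makes the primariness claim true at all. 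You identified and used the right (standard) definition, so there is no gap in your proof --- just note explicitly that you are invoking the shape condition rather than deducing it, and note the minor terminology slip that $Sc=S$ makes $S$ a union of cosets of the form $sC$ (left cosets in the usual convention).
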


\vspace{0.2cm}

In this paper, we generalize Theorem \ref{drapl} by describing a new method to construct Latin bitrades embedded in a given Latin square $L$, via subgroups of the autoparatopism group of $L$. This new method is described in Section \ref{Section:Automorphism}, where we also characterize the fundamental aspects of the Latin bitrades so constructed, as its size, homogeneity or orthogonality, amongst others. We finish that section with some examples illustrating all these results. A pair of more comprehensive examples based on the use of Mersenne primes, and on quadratic orthomorphisms over finite fields, are described in Section \ref{section:ort}.

\section{Preliminaries}
\label{Section:Preliminaries}

In this section, we describe some basic concepts, notations and results on partial Latin squares that are used throughout the paper. See \cite{Keedwell2015} for more details on this topic, and \cite{Cavenagh2008} for a survey on Latin bitrades.

\vspace{0.2cm}

Every Latin square of order $n$ constitutes the Cayley table of a {\em quasigroup} of the same order. That is, a pair $(Q,*)$ formed by a finite set $Q$ of $n$ elements that is endowed with a binary operation $*$, so that both equations $i*x=j$ and $y*i = j$ have unique solution $x,y\in Q$, for all $i,j\in Q$. Equivalently, the set $Q$ is endowed with right- and left-division. If the multiplication is associative, then the quasigroup is indeed a group. From here on, whenever there is no confusion, we denote by $Q$ the quasigroup $(Q,*)$, and we denote by $ij$ the product $i*j$, for all $i,j\in Q$. In addition, we denote by $L_*(Q)$ (by $L(Q)$, if there is no confusion) the Latin square describing the Cayley table of the quasigroup $(Q,*)$.

Two quasigroups $(Q,*)$ and $(Q',\circ)$ are {\em paratopic} if there are three bijections $f_1$, $f_2$ and $f_3$ from $Q$ to $Q'$, and a permutation $\pi$ in the symmetric group $S_3$, such that, 

\[f_{\pi(1)}(e_{\pi(1)})\circ f_{\pi(2)}(e_{\pi(2)})=f_{\pi(3)}(e_{\pi(e_3)}),\]
for all $(e_1,e_2,e_3)\in \mathrm{Ent}(L(Q))$. The tuple $\Theta=(f_1,f_2,f_3;\pi)$ is a {\em paratopism} from $(Q,*)$ to $(Q',\circ)$. It acts on the set $\mathrm{Ent}(L(Q))$ in the following way.
\begin{itemize}
    \item First, we permute the rows of $L(Q)$ according to the bijection $f_1$, its columns according to $f_2$, and its symbols according to $f_3$, giving rise to an intermediate Latin square of set of entries $\left\{\left(f_1(e_1),\,f_2(e_2),\,f_3(e_3)\right)\mid (e_1,e_2,e_3)\in\mathrm{Ent}(L(Q))\right\}$.

    \item Then, we permute the coordinates of each entry in this intermediate Latin square according to the permutation $\pi$, so that we get the set of entries
    
\begin{equation}\label{eq:paratopism}
\begin{array}{c}\mathrm{Ent}(L(Q'))=\\
\left\{\left(f_{\pi(1)}(e_{\pi(1)}), f_{\pi(2)}(e_{\pi(2)}), f_{\pi(3)}(e_{\pi(3)})\right)\mid (e_1,e_2,e_3)\in\mathrm{Ent}(L(Q))\right\}.
\end{array}
\end{equation}    
\end{itemize}

Thus, from now on, we denote

\[\Theta((e_1,e_2,e_3)):=(f_{\pi(1)}(e_{\pi(1)}), f_{\pi(2)}(e_{\pi(2)}), f_{\pi(3)}(e_{\pi(3)})).\]

Note that the composition of two paratopisms is again a paratopism. More specifically, if $\Theta'=(g_1,g_2,g_3;\rho)$ is a paratopism from the quasigroup $(Q',\circ)$ to a third quasigroup $(Q'',\diamond)$, then, for each entry $(e_1,e_2,e_3)\in \mathrm{Ent}(L(Q))$, we have that

\begin{align*}
\Theta'\Theta((e_1,e_2,e_3)) & =
\Theta'\left(f_{\pi(1)}(e_{\pi(1)}), f_{\pi(2)}(e_{\pi(2)}), f_{\pi(3)}(e_{\pi(3)})\right)\\& = \left(g_{\rho(1)}f_{\pi(\rho(1))}(e_{\pi(\rho(1))}), g_{\rho(2)}f_{\pi(\rho(2))}(e_{\pi(\rho(2))}), g_{\rho(3)}f_{\pi(\rho(3))}(e_{\pi(\rho(3))})\right)\\
& = \left(g_{\pi^{-1}(1)}f_1,g_{\pi^{-1}(2)}f_2,g_{\pi^{-1}(3)}f_3;\pi\rho\right)(e_1,e_2,e_3).
\end{align*}
That is,

\begin{equation}\label{equation_composition}
\Theta'\Theta:=\left(g_{\pi^{-1}(1)}f_1,g_{\pi^{-1}(2)}f_2,g_{\pi^{-1}(3)}f_3;\pi\rho\right).
\end{equation}

\vspace{0.5cm}

Let us illustrate the previous notions by focusing on the set $\mathcal{Q}_n$ of quasigroups of order $n$, with entries in the set of symbols $[n]:=\{0,1,\ldots,n-1\}$.

\vspace{0.25cm}

\begin{example} \label{example_autoparatopism}
Let $Q$, $Q'$ and $Q''$ be three quasigroups in $\mathcal{Q}_4$ of respective Cayley tables

\[L(Q)\equiv\begin{array}{|c|c|c|c|}\hline
1&3& 0& 2\\ \hline
0& 2& 1& 3\\ \hline
2& 0& 3& 1\\ \hline 
3& 1& 2& 0\\ \hline
\end{array}, \hspace{2cm} 
L(Q')\equiv\begin{array}{|c|c|c|c|}\hline
0 & 1& 2& 3\\ \hline
3& 2& 1& 0\\ \hline
2& 3& 0& 1\\ \hline 
1& 0& 3& 2\\ \hline
\end{array} \hspace{1cm} \text{and}\hspace{1cm}
L(Q'')\equiv\begin{array}{|c|c|c|c|}\hline
2&3& 0& 1\\ \hline
3& 2& 1& 0\\ \hline
1& 0& 3& 2\\ \hline 
0& 1& 2& 3\\ \hline
\end{array}.
\]
It can readily be checked that the tuples

\[\Theta = \left((01),(123),\mathrm{Id};(123)\right) \hspace{1cm} \text{and} \hspace{1cm} \Theta'= \left((0132),(12),(03);(23)\right)\]
are respective paratopisms from $Q$ to $Q'$, and from $Q'$ to $Q''$. Then, we have from (\ref{equation_composition}) that

\[\Theta'\Theta = \left((013),(01),(12);(12)\right)\]
is a paratopism from $Q$ to $Q''$. \hfill $\lhd$    
\end{example}

\newpage

Let $\Theta=(f_1,f_2,f_3;\pi)$ be a paratopism from a quasigroup $Q$ to a quasigroup $Q'$. If $f_1=f_2=f_3$ is the trivial permutation on $Q$, then $\Theta$ is said to be a {\em parastrophism} from $Q$ to its {\em parastrophe} $Q'$. 
In many papers the term ``conjugate'' is used for parastrophe. However, since group theory conjugates are used in this paper, we adhere to the term parastrophe to avoid confusion. 
If this is the case, the entries of $L(Q')$ coincide with those ones of $L(Q)$ after interchanging the role of rows, columns and symbols according to the permutation $\pi\in S_3$. The quasigroup $Q$ is {\em totally symmetric} if it is identical to each of its six parastrophes.

The tuple $\Theta=(f_1,f_2,f_3;\pi)$ is an {\em isotopism} if $\pi$ is the trivial permutation in the symmetric group $S_3$, in which case, the quasigroups $Q$ and $Q'$ are said to be {\em isotopic}, and the tuple $\Theta$ is denoted only by $(f_1,f_2,f_3)$. It is an {\em isomorphism} if $f_1=f_2=f_3:=f$, in which case we denote the triple $\Theta$ only by $f$. 

Furthermore, if $Q=Q'$, then the paratopism (respectively, the isotopism or the isomorphism) is an {\em autoparatopism} (respectively, an {\em autotopism} or an {\em automorphism}). We denote by $\mathrm{Apar}(Q)$ (respectively, $\mathrm{Atop}(Q)$ and $\mathrm{Aut}(Q)$) the set of autoparatopisms (respectively, the sets of autotopisms and automorphisms) of the quasigroup $Q$. The three sets have group structure. More specifically, the set of autoparatopisms has group structure with the composition described in (\ref{equation_composition}), while both the set of autotopisms and the set of automorphisms have group structure with the component-wise composition of permutations. They are respectively called the \emph{autoparatopism group}, the \emph{autotopism group} and the {\em automorphism group} of the quasigroup $Q$. In particular, $\mathrm{Aut}(Q)\leq\mathrm{Atop}(Q)\trianglelefteq\mathrm{Apar}(Q)$ (see \cite[Lemma 1]{Falcon2017}).

Let $H\in\{\mathrm{Apar}(Q),\,\mathrm{Atop}(Q),\,\mathrm{Aut}(Q)\}$. The {\em stabilizer} of an entry $e:=(e_1,e_2,e_3)\in \mathrm{Ent}(L(Q))$ with respect to a subgroup $G\leq H$ is the subgroup

\begin{align*}\mathrm{Stab}_G(e):& =\{(f_1,f_2,f_3;\pi)\in G\mid f_{\pi(i)}(e_{\pi(i)})=e_i, \text{ for all } i\in\{1,2,3\}\}\leq G\\
& = \mathrm{Stab}^{\mathrm{row}}_G(e)\cap \mathrm{Stab}^{\mathrm{col}}_G(e)\cap \mathrm{Stab}^{\mathrm{sym}}_G(e),
\end{align*}
where

\begin{align*}\mathrm{Stab}^{\mathrm{row}}_G(e):& =\{(f_1,f_2,f_3;\pi)\in G\mid f_{\pi(1)}(e_{\pi(1)})=e_1\}\leq G,\\
\mathrm{Stab}^{\mathrm{col}}_G(e): & =\{(f_1,f_2,f_3;\pi)\in G\mid f_{\pi(2)}(e_{\pi(2)})=e_2\}\leq G,\\
\mathrm{Stab}^{\mathrm{sym}}_G(e): & =\{(f_1,f_2,f_3;\pi)\in G\mid f_{\pi(3)}(e_{\pi(3)})=e_3\}\leq G.
\end{align*}
By abuse of notation, we also define the stabilizer of an element $i\in Q$ with respect to a subgroup $G\leq \mathrm{Aut}(Q)$ as the subgroup

\[\mathrm{Stab}_G(i):=\{f\in G\mid f(i)=i\}\leq G.\]

Based on (\ref{eq:paratopism}), all the previous concepts and notations are naturally translated to partial Latin squares. Furthermore, we say that two Latin bitrades $(T_1,T'_1)$ and $(T_2,T'_2)$ are paratopic if their respective components are paratopic. In this sense, we say that a partial Latin square $P$ is {\em entry-transitive} if the group $\mathrm{Apar}(Q)$ acts transitively on $\mathrm{Ent}(P)$. 

\section{The constructive method}
\label{Section:Automorphism}

Let $Q$ be a quasigroup. In this section, we generalize the construction of Latin bitrades described in Theorem \ref{drapl} via a subgroup of the autoparatopism group $\mathrm{Apar}(Q)$. The following lemma is useful to this end.

\vspace{0.25cm}

\begin{lemma}\label{lemma_rows} Let $G\leq \mathrm{Apar}(Q)$ and $e:=(e_1,e_2,e_3)\in\mathrm{Ent}(L(Q))$. Let $r\in Q$ be such that there is an autoparatopism $(f_1,f_2,f_3;\pi)\in G$ satisfying $f_{\pi(1)}(e_{\pi(1)})=r$. Then, there is a one-to-one correspondence between the set of autoparatopisms in $G$ mapping row $e_1$ to row $r$ in $L(Q)$, and the set $\mathrm{Stab}^{\mathrm{row}}_G(e)$.    
\end{lemma}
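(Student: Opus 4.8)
The plan is to realize the asserted correspondence as a coset bijection, in the spirit of the orbit-stabilizer theorem for the action of $G$ on $\mathrm{Ent}(L(Q))$. Set
\[
S:=\{(f_1,f_2,f_3;\pi)\in G\mid f_{\pi(1)}(e_{\pi(1)})=r\},
\]
the set of autoparatopisms in $G$ that carry the entry $e$ (which lies in row $e_1$) to an entry lying in row $r$; note that the special case $r=e_1$ reproduces exactly $\mathrm{Stab}^{\mathrm{row}}_G(e)$, so the lemma is really a ``change of base row'' statement. By hypothesis $S$ is non-empty, so fix once and for all some $\Theta_0=(h_1,h_2,h_3;\tau)\in S$; thus $h_{\tau(1)}(e_{\tau(1)})=r$.

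I would then propose $\Theta\mapsto\Theta_0^{-1}\Theta$ as the map $S\to\mathrm{Stab}^{\mathrm{row}}_G(e)$, with candidate inverse $\Psi\mapsto\Theta_0\Psi$. Since left multiplication by $\Theta_0$ and by $\Theta_0^{-1}$ are mutually inverse bijections of the set $G$, the argument reduces to the two containments $\Theta_0^{-1}S\subseteq\mathrm{Stab}^{\mathrm{row}}_G(e)$ and $\Theta_0\,\mathrm{Stab}^{\mathrm{row}}_G(e)\subseteq S$. Each is a direct computation: using the composition rule~(\ref{equation_composition}) one writes out $\Theta_0^{-1}\Theta$ (respectively $\Theta_0\Psi$) explicitly, and then one evaluates the first coordinate of its image of $e$ via the action rule $\Theta((e_1,e_2,e_3))=(f_{\pi(1)}(e_{\pi(1)}),f_{\pi(2)}(e_{\pi(2)}),f_{\pi(3)}(e_{\pi(3)}))$, checking that it equals $e_1$ (respectively $r$). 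Conceptually, $\Theta_0$ manufactures the row label $r$ out of $e$ and $\Theta_0^{-1}$ reverses precisely this step, so the two evaluations should telescope.

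The step I expect to be the main obstacle is the permutation bookkeeping. Since $\Theta_0$, $\Theta$ and $\Psi$ may each involve a non-trivial parastrophe component, one must track carefully, after each composition, which of the three component bijections acts on the first coordinate of the image of $e$; in particular one needs the component of $\Theta_0^{-1}$ relevant here to be exactly $h_{\tau(1)}^{-1}$, the inverse of the bijection that sent $e_{\tau(1)}$ to $r$. Once this indexing is pinned down, the computation collapses via the identity $h_{\tau(1)}^{-1}(h_{\tau(1)}(e_{\tau(1)}))=e_{\tau(1)}$, both containments follow, and we obtain the one-to-one correspondence $S\leftrightarrow\mathrm{Stab}^{\mathrm{row}}_G(e)$; in particular $|S|=|\mathrm{Stab}^{\mathrm{row}}_G(e)|$.
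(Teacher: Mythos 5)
Your overall strategy is the same device the paper uses, namely a coset-style correspondence obtained by composing with a fixed autoparatopism (the paper takes $\overline{\Theta}\mapsto\overline{\Theta}^{-1}\Theta$ with the fixed element on the right, you take the left translation $\Theta\mapsto\Theta_0^{-1}\Theta$), and for $G\leq\mathrm{Atop}(Q)$, or more generally whenever the permutation components in play fix the first coordinate, your two containments do verify exactly as you sketch. The gap is precisely at the step you flag as the crux, and your resolution of it is wrong. With the composition rule (\ref{equation_composition}), one gets $\Theta_0^{-1}=(h_{\tau(1)}^{-1},h_{\tau(2)}^{-1},h_{\tau(3)}^{-1};\tau^{-1})$, and the first coordinate of $\Theta_0^{-1}(w)$ is $h_1^{-1}\bigl(w_{\tau^{-1}(1)}\bigr)$, not $h_{\tau(1)}^{-1}(w_1)$. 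Hence if $\tau(1)\neq 1$, membership of $\Theta$ in $S$ controls only $w_1=r$ for $w:=\Theta(e)$ and says nothing about the coordinate $w_{\tau^{-1}(1)}$ that $\Theta_0^{-1}$ actually reads; and even the telescoping you invoke, $h_{\tau(1)}^{-1}(h_{\tau(1)}(e_{\tau(1)}))=e_{\tau(1)}$, lands on $e_{\tau(1)}$ rather than $e_1$. So neither $\Theta_0^{-1}S\subseteq\mathrm{Stab}^{\mathrm{row}}_G(e)$ nor $\Theta_0\,\mathrm{Stab}^{\mathrm{row}}_G(e)\subseteq S$ is established when the base element mixes coordinates.

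This is not a vacuous worry about bookkeeping: take $Q=\mathbb{Z}_3$ with $L[i,j]=i+j$, let $\Theta_0:=(x+1,\mathrm{Id},x+1;(12))$, which acts by $(e_1,e_2,e_3)\mapsto(e_2,e_1+1,e_3+1)$, let $G=\langle\Theta_0\rangle$ (of order $6$), $e=(0,0,0)$ and $r=1$. Then $S=\{\Theta_0^2,\Theta_0^3\}$ and $\mathrm{Stab}^{\mathrm{row}}_G(e)=\{\mathrm{Id},\Theta_0\}$, so the conclusion of the lemma holds, but if you pick the base point $\Theta_0^3$ (whose permutation part moves $1$) your map sends $\Theta_0^2$ to $\Theta_0^{-1}$, which is not in $\mathrm{Stab}^{\mathrm{row}}_G(e)$; so the proposed correspondence genuinely fails for such a base choice. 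What makes the computation collapse is choosing $\Theta_0$ whose permutation part fixes the first coordinate, i.e.\ one carrying the whole row $e_1$ onto the whole row $r$: then $\Theta_0^{-1}$ carries row $r$ back onto row $e_1$ as a set and both of your containments are immediate, with no component chasing at all. That whole-row reading is what the paper's phrase ``mapping row $e_1$ to row $r$'' is doing (and the paper is itself terse about elements of $S$ with nontrivial parastrophe part); but as written, your pinning down of the indexing is incorrect, so the general autoparatopism case of the containments is not proved by your argument.
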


\begin{proof} Let $\Theta:=(f_1,f_2,f_3;\pi)$ and let $S$ denote the set of autoparatopisms in $G$ mapping row $e_1$ to row $r$ in $L(Q)$. Then, it is enough to consider the correspondence mapping each autoparatopism $\overline{\Theta}\in S$ to the autoparatopism $\overline{\Theta}^{-1}\Theta\in \mathrm{Stab}^{\mathrm{row}}_G(e)$. It is one-to-one, because each component of an autoparatopism is a 
 bijection.
\end{proof}

\vspace{0.2cm}

The following result describes the main constructive method to be studied in this paper.

\vspace{0.15cm}

\begin{theorem}\label{proposition_aut}
Let $G\leq \mathrm{Apar}(Q)$ and $\tau:=(e,\Theta,\overline{\Theta})\in \mathrm{Ent}(L(Q))\times G\times G$ be such that
\begin{enumerate}
    \item[{\rm (C1)}] $\Theta\in \mathrm{Stab}_G^{\mathrm{row}}(e)\setminus \mathrm{Stab}_G^{\mathrm{col}}(e)$;
    \item[{\rm (C2)}] $\overline{\Theta}\in\mathrm{Stab}_G^{\mathrm{col}}(e)$; and
    \item[{\rm (C3)}] $\overline{\Theta}^{-1}\Theta\in \mathrm{Stab}_G^{\mathrm{sym}}(e)$.
\end{enumerate}
If $e:=(e_1,e_2,e_3)$ and $\Theta:=(f_1,f_2,f_3;\pi)$, then the pair of partial Latin squares $(T_{\tau,G},T'_{\tau,G})$, with respective set of entries 

\[\mathrm{Ent}(T_{\tau,G}):=\left\{\Theta'(e)\mid \Theta'\in G\right\}\]
and

\[\mathrm{Ent}(T'_{\tau,G}):=\left\{\Theta'\left((e_1,e_2,f_{\pi(3)}(e_{\pi(3)}))\right)\mid \Theta'\in G\right\}\]
is a Latin bitrade of size $|G| / |\stab_G(e)|$.
\end{theorem}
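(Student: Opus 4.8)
The plan is to realise both $T:=T_{\tau,G}$ and $T':=T'_{\tau,G}$ as $G$-orbits, to read off the size and the disjointness of $T$ and $T'$ with little effort, and then to concentrate the real work on showing that replacing $\mathrm{Ent}(T)$ by $\mathrm{Ent}(T')$ inside $\mathrm{Ent}(L(Q))$ produces a Latin square --- this is condition~(3) in the definition of a Latin bitrade, and it will also yield condition~(1) by a counting argument.

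The easy part first. The set $\mathrm{Ent}(T)$ is by construction the $G$-orbit of the entry $e$, and since every element of $G\leq\mathrm{Apar}(Q)$ permutes $\mathrm{Ent}(L(Q))$, we get $\mathrm{Ent}(T)\subseteq\mathrm{Ent}(L(Q))$; hence $T$ is automatically a non-empty partial Latin square embedded in $L(Q)$, and the orbit--stabilizer theorem gives $|\mathrm{Ent}(T)|=|G|/|\mathrm{Stab}_G(e)|$, the claimed size. Writing $\Theta=(f_1,f_2,f_3;\pi)$ and $s:=f_{\pi(3)}(e_{\pi(3)})$, condition~{\rm (C1)} says $\Theta(e)=(e_1,f_{\pi(2)}(e_{\pi(2)}),s)$ with $f_{\pi(2)}(e_{\pi(2)})\neq e_2$; thus $\widehat e:=(e_1,e_2,s)$ and $\Theta(e)$ share a row of $L(Q)$ but lie in different columns, and since an entry of a Latin square is determined by its row and its symbol, it follows that $s\neq e_3$ and $\widehat e\notin\mathrm{Ent}(L(Q))$. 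As $\mathrm{Ent}(T')$ is the $G$-orbit of $\widehat e$ and $G$ preserves $\mathrm{Ent}(L(Q))$ setwise, $\mathrm{Ent}(T')$ is disjoint from $\mathrm{Ent}(L(Q))$; in particular $\mathrm{Ent}(T)\cap\mathrm{Ent}(T')=\emptyset$ and $T'\neq\emptyset$, which is condition~(2).

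Now set $L^{\ast}:=\bigl(\mathrm{Ent}(L(Q))\setminus\mathrm{Ent}(T)\bigr)\cup\mathrm{Ent}(T')$; the goal is that $L^{\ast}$ is the entry set of a Latin square of order $|Q|$. Each of $\mathrm{Ent}(L(Q))$, $\mathrm{Ent}(T)$, $\mathrm{Ent}(T')$ is $G$-invariant, hence so is $L^{\ast}$, and autoparatopisms preserve the number of coordinates in which two triples agree; therefore any failure of the Latin property occurs in a full $G$-orbit, and translating such a pair by an element of $G$ (and using that two members of $\mathrm{Ent}(L(Q))\setminus\mathrm{Ent}(T)$ cannot clash) reduces everything to: \emph{nothing in $L^{\ast}$ agrees with $\widehat e$ in exactly two coordinates}, i.e.\ every triple of $L^{\ast}$ that shares two of the three coordinates of $\widehat e$ is equal to $\widehat e$. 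This is where {\rm (C1)}--{\rm (C3)} enter, together with the elementary remark that an entry of $L(Q)$ is determined by any two of its coordinates (so that $\mathrm{Stab}_G(e)$ is the intersection of any two of $\mathrm{Stab}^{\mathrm{row}}_G(e),\mathrm{Stab}^{\mathrm{col}}_G(e),\mathrm{Stab}^{\mathrm{sym}}_G(e)$, which, together with Lemma~\ref{lemma_rows}, also describes how the entries of $T$ are distributed among its rows and columns): the unique entry of $L(Q)$ sharing the row and the symbol of $\widehat e$ is $\Theta(e)\in\mathrm{Ent}(T)$, by {\rm (C1)}; the one sharing the row and the column of $\widehat e$ is $e\in\mathrm{Ent}(T)$; and the one sharing the column and the symbol of $\widehat e$ must be shown to lie in $\mathrm{Ent}(T)$ by a joint use of {\rm (C2)} and {\rm (C3)}, exploiting $\overline\Theta\in\mathrm{Stab}^{\mathrm{col}}_G(e)$, $\overline\Theta^{-1}\Theta\in\mathrm{Stab}^{\mathrm{sym}}_G(e)$ and the identity $\Theta=\overline\Theta\cdot(\overline\Theta^{-1}\Theta)$ --- the analogue here of the relation $abc=1$ in Theorem~\ref{drapl}. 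Once $L^{\ast}$ is a Latin square, condition~(3) holds; condition~(1) follows from $|\mathrm{Ent}(L(Q))|-|\mathrm{Ent}(T)|+|\mathrm{Ent}(T')|=|L^{\ast}|=|Q|^2=|\mathrm{Ent}(L(Q))|$; and $T'$, being a subset of the Latin square $L^{\ast}$, is a partial Latin square. Hence $(T,T')$ is a Latin bitrade of $L(Q)$ of size $|G|/|\mathrm{Stab}_G(e)|$.

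The main obstacle is the reduced claim about $\widehat e$: elements of $G$ may carry non-trivial $S_3$-components, so passing from the symbol $e_3$ of $e$ to the symbol $s$ of $\widehat e$ does not commute with the $G$-action in any naive way, and one must track carefully how each $\pi\in S_3$ moves the three coordinate slots, genuinely using all of {\rm (C1)}, {\rm (C2)}, {\rm (C3)} rather than {\rm (C1)} alone. The most delicate case is the one where an entry of $L(Q)$ shares the column $e_2$ and the symbol $s$ with $\widehat e$, where one really needs both $\overline\Theta$ and $\overline\Theta^{-1}\Theta$. Finally, it bears stressing that, unlike $T$, the array $T'$ is not contained in $L(Q)$, so none of its combinatorial properties --- not even being a partial Latin square --- is automatic, and all of them have to be extracted from conditions {\rm (C1)}--{\rm (C3)}.
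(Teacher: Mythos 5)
Your orbit description of $T_{\tau,G}$, the orbit--stabilizer count, and the observation that $\widehat e:=(e_1,e_2,f_{\pi(3)}(e_{\pi(3)}))\notin\mathrm{Ent}(L(Q))$ (hence disjointness) all match the paper's proof. The problem is that the heart of the theorem is not actually proved. Your reduction to the local claim ``no triple of $L^{\ast}$ agrees with $\widehat e$ in exactly two coordinates'' is legitimate, but the decisive case --- that the unique entry of $L(Q)$ in column $e_2$ carrying the symbol $s:=f_{\pi(3)}(e_{\pi(3)})$ lies in $\mathrm{Ent}(T_{\tau,G})$ --- is precisely where (C2) and (C3) have to be made to work, and you only announce that it ``must be shown'' using $\overline\Theta$ and $\overline\Theta^{-1}\Theta$. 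The natural candidate is $\overline\Theta(e)$, which has column $e_2$ by (C2); but identifying its symbol with $s$ requires unwinding (C3) through the composition rule (\ref{equation_composition}): the third coordinate of $\overline\Theta^{-1}\Theta(e)$ is $\overline f_3^{-1}$ applied to the $\overline\pi^{-1}(3)$-th coordinate of $\Theta(e)$, so when $\pi$ or $\overline\pi$ moves the symbol slot the desired identity is not a formality, as you yourself concede. This is exactly the step where the paper does its real work (the identity $\mathrm{Stab}^{\mathrm{col}}_G(e)=\left\{\Theta'\overline\Theta\mid\Theta'\in\mathrm{Stab}^{\mathrm{col}}_G(e)\right\}$ combined with (C3) to match the column-$e_2$ symbols of $T_{\tau,G}$ and $T'_{\tau,G}$), so omitting it leaves the theorem's content unverified.

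There are two further structural gaps in your route, which differs from the paper's (you aim at condition (3) directly, the paper verifies ($3^{\ast}$) row-by-row and column-by-column via Lemma \ref{lemma_rows}). First, ruling out pairs of triples of $L^{\ast}$ agreeing in two coordinates only shows that $L^{\ast}$ is a \emph{partial} Latin square; to conclude it is a Latin square --- and then to read off $|T_{\tau,G}|=|T'_{\tau,G}|$ as you propose --- you need $|L^{\ast}|=|Q|^2$, i.e.\ $|T'_{\tau,G}|\geq|T_{\tau,G}|$, and you obtain that only \emph{from} $L^{\ast}$ being a Latin square, which is circular. An independent argument is required, e.g.\ that every cell (and every row--symbol and column--symbol pair) vacated by $T_{\tau,G}$ is covered by $T'_{\tau,G}$, or that $|\mathrm{Stab}_G(\widehat e)|\leq|\mathrm{Stab}_G(e)|$; this is essentially the content of ($3^{\ast}$) and cannot be skipped. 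Second, your reduced claim quantifies over all of $L^{\ast}$, but the three cases you list concern only entries of $L(Q)$; possible coincidences between $\widehat e$ and other elements of the orbit $\mathrm{Ent}(T'_{\tau,G})$ --- equivalently, the fact that $T'_{\tau,G}$ is itself a partial Latin square, which is not automatic since its triples are not entries of $L(Q)$ --- are never addressed, and nothing you establish excludes them.
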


\begin{proof} Since $G\leq \mathrm{Apar}(Q)$, we have that $T_{\tau,G}\subseteq L(Q)$. To determine the size of $T_{\tau,G}$, we suppose the existence of two autoparatopisms $\Theta_1:=(g_1,g_2,g_3;\rho)$ and $\Theta_2:=(g'_1,g'_2,g'_3;\rho')$ in $G$ such that $(g_{\rho(1)}(e_{\rho(1)}),\,g_{\rho(2)}(e_{\rho(2)}))=(g'_{\rho'(1)}(e_{\rho'(1)}),\,g'_{\rho'(2)}(e_{\rho'(2)}))$. From the condition of autoparatopism, we have that $\Theta_1(e),\, \Theta_2(e)\in\mathrm{Ent}(L(Q))$. It implies that $g_{\rho(3)}(e_{\rho(3)})=g'_{\rho'(3)}(e_{\rho'(3)})$, because any two distinct entries in a Latin square agree in at most one coordinate. Thus, $\Theta_1^{-1}\Theta_2\in \stab_G(e)$. Hence, $|T_{\tau,G}|=|G| / |\stab_G(e)|$. 

Next, we prove that $\mathrm{Ent}(T_{\tau,G})\cap \mathrm{Ent}(T'_{\tau,G})=\emptyset$. Otherwise, there are two autoparatopisms $\Theta_1,\,\Theta_2\in G$ such that $\Theta_1(e)=\Theta_2((e_1,e_2,f_{\pi(3)}(e_{\pi(3)})))\in \mathrm{Ent}(L(Q))$. Then, we have that $(e_1,e_2,f_{\pi(3)}(e_{\pi(3)}))\in \mathrm{Ent}(L(Q))$. Since $(e_1,e_2,e_3)\in\mathrm{Ent}(L(Q))$ and, again, any two distinct entries in a Latin square agree in at most one coordinate, it must be $f_{\pi(3)}(e_{\pi(3)}))=e_3$. But (C1) implies that $f_{\pi(3)}(e_{\pi(3)})=f_{\pi(1)}(e_{\pi(1)})f_{\pi(2)}(e_{\pi(2)})=e_1f_{\pi(2)}(e_{\pi(2)})\neq e_1e_2=e_3$, which is a contradiction. Thus, $\mathrm{Ent}(T_{\tau,G})\cap \mathrm{Ent}(T'_{\tau,G})=\emptyset$.

Now, we show that each non-empty row of $T_{\tau,G}$ and $T'_{\tau,G}$ contains the same set of symbols. From Lemma \ref{lemma_rows}, it suffices to show this for row $e_1$. To this end, since $\Theta\in \mathrm{Stab}^\mathrm{row}_G(e)$, note that 

\[\mathrm{Stab}^\mathrm{row}_G(e) =\left\{\Theta'\Theta\mid \Theta'\in \mathrm{Stab}^\mathrm{row}_G(e)\right\}.\]
Then, from (\ref{equation_composition}), row $e_1$ of
$T_{\tau,G}$ contains the set of symbols

\[\left\{g_{\rho(3)}\left(e_{\rho(3)}\right)\mid
\left(g_1,g_2,g_3;\rho\right)\in \mathrm{Stab}^\mathrm{row}_G(e)
\right\}=\]\[\left\{g_{\rho(3)}f_{\pi(\rho(3))}(e_{\pi(\rho(3))})\mid
\left(g_1,g_2,g_3;\rho\right)\in \mathrm{Stab}^\mathrm{row}_G(e)
\right\},\]
which is the set of symbols in row $e_1$ of $T'_{\tau,G}$.

Finally, we show that each non-empty column of $T_{\tau,G}$ and $T'_{\tau,G}$ contains the same set of symbols. From parastrophism and Lemma \ref{lemma_rows}, it suffices to show this for column $e_2$. To this end, suppose that $\overline{\Theta}:=(\overline{f}_1,\overline{f}_2,\overline{f}_3;\overline{\pi})$. From (C2), we have that 

\[\mathrm{Stab}^\mathrm{col}_G(e) =\left\{\Theta'\overline{\Theta}\mid \Theta'\in \mathrm{Stab}^\mathrm{col}_G(e)\right\}.\]
In addition, we have from (C3) that $\left(\Theta'\overline{\Theta}\right)^{-1}\left(\Theta'\Theta\right)\in \mathrm{Stab}_G^{\mathrm{sym}}(e)$, for all $\Theta'\in G$. From (\ref{equation_composition}), this is equivalent to the fact that $g_{\rho(3)}f_{\pi(\rho(3))}\left(e_{\pi(\rho(3))}\right)=g_{\rho(3)}\overline{f}_{\overline{\pi}(\rho(3))}\left(e_{\overline{\pi}(\rho(3))}\right)$, for all $\left(g_1,g_2,g_3;\rho\right)\in G$. Hence, column $e_2$ of $T_{\tau,G}$ contains the set of symbols

\[\begin{array}{c}\left\{g_{\rho(3)}\left(e_{\rho(3)}\right)\mid
\left(g_1,g_2,g_3;\rho\right)\in \mathrm{Stab}^\mathrm{col}_G(e)
\right\}\\
=\left\{
g_{\rho(3)}\overline{f}_{\overline{\pi}(\rho(3))}(e_{\overline{\pi}(\rho(3))})\mid
\left(g_1,g_2,g_3;\rho\right)\in \mathrm{Stab}^\mathrm{col}_G(e)
\right\}\\
=\left\{
g_{\rho(3)}f_{\pi(\rho(3))}(e_{\pi(\rho(3))})\mid
\left(g_1,g_2,g_3;\rho\right)\in \mathrm{Stab}^\mathrm{col}_G(e)
\right\}
\end{array}\]
which is the set of symbols in column $e_2$ of $T'_{\tau,G}$.
\end{proof}

\vspace{0.5cm}

\begin{example}\label{example_atop} Let $Q$ be the quasigroup of Cayley table

\[L(Q)\equiv\begin{array}{|c|c|c|c|} \hline
0 & 2 & 3 & 1\\ \hline 
1 & 3 & 2 & 0\\ \hline 
3 & 1 & 0 & 2\\ \hline 
2 & 0 & 1 & 3\\ \hline 
\end{array}.\]
Its autoparatopism group coincides with its autotopism group, which is generated by the autotopisms

\[\left((0123),\, (13),\,(0132)\right) \hspace{2cm}\text{and}\hspace{2cm}\left((23),\,(0132),\,(0213)\right).\]
As a consequence, $|\mathrm{Apar}(Q)|=|\mathrm{Atop}(Q)|=96$. Let us consider the entry $e:=(0,0,0)\in\mathrm{Ent}(L(Q))$ and the subgroup $G\leq \mathrm{Atop}(Q)$ of size $12$ that is generated by the following two autotopisms of $Q$.

\[\Theta=\left((123),\,(012),\,(023)\right) \hspace{4cm} \overline{\Theta}=\left((03)(12),\,\mathrm{Id},\,(02)(13)\right)\]
Conditions (C1)--(C3) in Theorem \ref{proposition_aut} hold and hence, the pair of partial Latin squares $(T_{\tau,G},T'_{\tau,G})$, with $\tau:=(e,\Theta,\overline{\Theta})$ constitute a Latin bitrade of $L(Q)$. Its entries are highlighted in bold type within $L(Q)$ as follows, with entries in $T'_{\tau,G}$ shown as subscripts. (It is the way in which we represent Latin bitrades throughout the paper.)

\[\begin{array}{|c|c|c|c|} \hline
\mathbf{0_2} & \mathbf{2_3} & \mathbf{3_0} & 1\\ \hline 
\mathbf{1_3} & \mathbf{3_2} & \mathbf{2_1} & 0\\ \hline 
\mathbf{3_1} & \mathbf{1_0} & \mathbf{0_3} & 2\\ \hline 
\mathbf{2_0} & \mathbf{0_1} & \mathbf{1_2} & 3\\ \hline 
\end{array}\]
\hfill $\lhd$ 
\end{example}

\vspace{0.5cm}

In what follows, we characterize some fundamental aspects of the Latin bitrade described in Theorem \ref{proposition_aut}. First, we establish the number of entries in the Latin trade $T_{\tau,G}$.

\begin{lemma}\label{lemma_atop}
The Latin trade $T_{\tau,G}$ satisfies the following statements.
\begin{enumerate}
\item Each non-empty row has 
$|\stab^{\mathrm{row}}_G(e)|/|\stab_G(e)|$ entries.
\item Each non-empty column has 
$|\stab^{\mathrm{col}}_G(e)|/|\stab_G(e)|$ entries
\item Each entry appears 
$|\stab^{\mathrm{sym}}_G(e)|/|\stab_G(e)|$ times
\end{enumerate}
Hence, the Latin trade  $T_{\tau,G}$ is $k$-homogeneous, if and only if $|\stab^{\mathrm{row}}_G(e)|=|\stab^{\mathrm{col}}_G(e)|=|\stab^{\mathrm{sym}}_G(e)|=k\cdot |\stab_G(e)|$.
\end{lemma}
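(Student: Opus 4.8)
The plan is to count, for a fixed non-empty row $r$, the number of entries of $T_{\tau,G}$ lying in row $r$, and then argue that this count is independent of $r$; the analogous statements for columns and symbols follow by parastrophism, exactly as in the proof of Theorem~\ref{proposition_aut}. First I would observe that, by Lemma~\ref{lemma_rows}, every non-empty row of $T_{\tau,G}$ is the image of row $e_1$ under some autoparatopism in $G$, and that the number of autoparatopisms in $G$ carrying row $e_1$ to a given such row is constant, equal to $|\stab^{\mathrm{row}}_G(e)|$. So it suffices to count the entries in row $e_1$ itself.

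The entries of $T_{\tau,G}$ in row $e_1$ are precisely the triples $\Theta'(e)$ with $\Theta'\in\stab^{\mathrm{row}}_G(e)$: indeed, if $\Theta'(e)$ lies in row $e_1$, then $\Theta'$ sends row $e_1$ to row $e_1$ (using that $\Theta\in\stab^{\mathrm{row}}_G(e)$, so $e_1$ is a genuine row-index appearing among the data), hence $\Theta'\in\stab^{\mathrm{row}}_G(e)$; the converse is immediate. Now I would run the same argument as in the size computation of Theorem~\ref{proposition_aut}: for $\Theta_1,\Theta_2\in\stab^{\mathrm{row}}_G(e)$ we have $\Theta_1(e)=\Theta_2(e)$ as entries of $L(Q)$ if and only if they agree in all three coordinates, which (since two entries of a Latin square agreeing in two coordinates are equal) happens if and only if they agree in the column coordinate, i.e. if and only if $\Theta_2^{-1}\Theta_1$ fixes both the row and the column of $e$. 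But an element of $G$ fixing the row and column of $e$ automatically fixes the symbol of $e$ as well (two entries agreeing in two coordinates coincide), so $\Theta_2^{-1}\Theta_1\in\stab_G(e)$. Hence the map $\Theta'\mapsto\Theta'(e)$ from $\stab^{\mathrm{row}}_G(e)$ to the set of entries of $T_{\tau,G}$ in row $e_1$ is surjective with fibres exactly the left cosets of $\stab_G(e)$, giving $|\stab^{\mathrm{row}}_G(e)|/|\stab_G(e)|$ entries in row $e_1$. Since every non-empty row is paratopic to row $e_1$ via an element of $G$, and paratopisms preserve the number of entries in a row, every non-empty row has this many entries, proving part~(1). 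Parts~(2) and~(3) follow by replacing $\stab^{\mathrm{row}}$ with $\stab^{\mathrm{col}}$ or $\stab^{\mathrm{sym}}$ and invoking the column/symbol analogue of Lemma~\ref{lemma_rows} obtained by parastrophism.

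For the final ``$k$-homogeneous'' characterization, I would simply recall that $T_{\tau,G}$ is $k$-homogeneous precisely when $\mathrm{Ent}(T_{\tau,G})$ meets each row, each column and each symbol of $L(Q)$ in either $0$ or $k$ elements; by parts~(1)--(3) the nonzero intersection numbers are the three fixed quantities $|\stab^{\mathrm{row}}_G(e)|/|\stab_G(e)|$, $|\stab^{\mathrm{col}}_G(e)|/|\stab_G(e)|$ and $|\stab^{\mathrm{sym}}_G(e)|/|\stab_G(e)|$, so homogeneity with parameter $k$ is equivalent to all three being equal to $k$, i.e. to $|\stab^{\mathrm{row}}_G(e)|=|\stab^{\mathrm{col}}_G(e)|=|\stab^{\mathrm{sym}}_G(e)|=k\cdot|\stab_G(e)|$.

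The only subtle point — the main obstacle — is the bookkeeping around the permutation $\pi$ when identifying ``entries in row $e_1$'' with the orbit-type set $\{\Theta'(e)\mid\Theta'\in\stab^{\mathrm{row}}_G(e)\}$: one must be careful that $\stab^{\mathrm{row}}_G(e)$ as defined (those $(f_1,f_2,f_3;\pi)$ with $f_{\pi(1)}(e_{\pi(1)})=e_1$) really does coincide with the set of elements of $G$ that, viewed as acting on entries, send row $e_1$ to row $e_1$, and that Lemma~\ref{lemma_rows} indeed gives the constant $|\stab^{\mathrm{row}}_G(e)|$ for the count of autoparatopisms mapping row $e_1$ to any prescribed non-empty row of $T_{\tau,G}$. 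Both of these are already implicitly used in the proof of Theorem~\ref{proposition_aut}, so once they are stated cleanly the rest is the routine coset-counting above.
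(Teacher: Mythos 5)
Your proposal is correct and follows essentially the same route as the paper: reduce to row $e_1$ via Lemma \ref{lemma_rows}, identify the entries of $T_{\tau,G}$ in that row with $\{\Theta'(e)\mid\Theta'\in\stab^{\mathrm{row}}_G(e)\}$, and quotient by $\stab_G(e)$ using the fact that two entries of a (partial) Latin square agreeing in two coordinates coincide, with columns, symbols and the $k$-homogeneity statement handled exactly as in the paper (parastrophism plus parts (1)--(3)). The only cosmetic difference is that the paper counts distinct symbols in row $e_1$ while you count distinct entries, which amounts to the same coset computation.
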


\begin{proof} We prove the first statement. (The remaining ones hold by parastrophism, while the consequence holds readily from the three statements.) From Lemma \ref{lemma_rows}, it is enough to determine the number of entries in the row $e_1$. As we have already observed in the proof of Theorem \ref{proposition_aut}, row $e_1$ of $T_{\tau,G}$ contains the set of symbols

\[\left\{g_{\rho(3)}(e_{\rho(3)})\mid
\left(g_1,g_2,g_3;\rho\right)\in \mathrm{Stab}^\mathrm{row}_G(e)
\right\}.\]

Thus, if there are two distinct autoparatopisms $\Theta_1:=\left(g_1,g_2,g_3;\rho\right)$ and $\Theta_2:=\left(g'_1,g'_2,g'_3;\rho'\right)$ in $\mathrm{Stab}^\mathrm{row}_G(e)$ such that $g_{\rho(3)}(e_{\rho(3)})=g'_{\rho'(3)}(e_{\rho'(3)})$, then $\Theta_2^{-1}\Theta_1\in
\mathrm{Stab}^{\mathrm{sym}}_G(e)$. Since $\Theta_2^{-1}\Theta_1\in
\mathrm{Stab}^{\mathrm{row}}_G(e)$, we have that $\Theta_2^{-1}\Theta_1\in
\mathrm{Stab}_G(e)$, because any two distinct entries in a Latin trade agree in at most one coordinate. As a consequence, row $e_1$ contains $|\stab^{\mathrm{row}}_G(e)|/|\stab_G(e)|$ entries.    
\end{proof}

\vspace{0.5cm}

Now, we characterize the orthogonality of our Latin bitrade.

\vspace{0.25cm}

\begin{lemma}\label{lemma_Autot_Orthog}
The Latin bitrade $(T_{\tau,G}, 
T'_{\tau,G})$ is orthogonal if and only if
\begin{equation}\label{eq:orthog}\mathrm{Stab}^{\mathrm{sym}}_G(e)\cap \mathrm{Stab}^{\mathrm{sym}}_G((e_1,e_2,f_{\pi(3)}(e_{\pi(3)})))
\subseteq \mathrm{Stab}_G(e).
\end{equation}
\end{lemma}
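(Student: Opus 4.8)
The plan is to unwind the definition of orthogonality of the bitrade $(T_{\tau,G},T'_{\tau,G})$ directly in terms of the $G$-action, and to translate "two cells carry the same symbol" into membership in a symbol-stabilizer. Recall that $\mathrm{Ent}(T_{\tau,G})=\{\Theta'(e)\mid \Theta'\in G\}$ and $\mathrm{Ent}(T'_{\tau,G})=\{\Theta'(e^{\ast})\mid \Theta'\in G\}$, where I write $e^{\ast}:=(e_1,e_2,f_{\pi(3)}(e_{\pi(3)}))$ for brevity. By definition, $(T_{\tau,G},T'_{\tau,G})$ fails to be orthogonal precisely when there are two cells $(i,j)$ and $(i',j')$ of $T_{\tau,G}$ with $i\neq i'$, $j\neq j'$, such that $T_{\tau,G}[i,j]=T_{\tau,G}[i',j']$ \emph{and} $T'_{\tau,G}[i,j]=T'_{\tau,G}[i',j']$. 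Using the $G$-orbit descriptions, write the first cell as $\Theta_1(e)$ and the second as $\Theta_2(e)$ for suitable $\Theta_1,\Theta_2\in G$; then simultaneously $\Theta_1(e^{\ast})$ and $\Theta_2(e^{\ast})$ are the corresponding cells of $T'_{\tau,G}$ (here I use that $T_{\tau,G}$ and $T'_{\tau,G}$ share the same cells, cell by cell, under a common $\Theta'$, which is how the pair was constructed). Setting $\Psi:=\Theta_2^{-1}\Theta_1\in G$, the equality of symbols in $T_{\tau,G}$ at the two cells says $\Psi\in\mathrm{Stab}^{\mathrm{sym}}_G(e)$, and the equality of symbols in $T'_{\tau,G}$ says $\Psi\in\mathrm{Stab}^{\mathrm{sym}}_G(e^{\ast})$; meanwhile $i\neq i'$ and $j\neq j'$ say precisely $\Psi\notin\mathrm{Stab}^{\mathrm{row}}_G(e)$ and $\Psi\notin\mathrm{Stab}^{\mathrm{col}}_G(e)$, hence $\Psi\notin\mathrm{Stab}_G(e)$.

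Thus the negation of orthogonality is equivalent to the existence of $\Psi\in\mathrm{Stab}^{\mathrm{sym}}_G(e)\cap\mathrm{Stab}^{\mathrm{sym}}_G(e^{\ast})$ with $\Psi\notin\mathrm{Stab}_G(e)$; contrapositively, orthogonality holds if and only if $\mathrm{Stab}^{\mathrm{sym}}_G(e)\cap\mathrm{Stab}^{\mathrm{sym}}_G(e^{\ast})\subseteq\mathrm{Stab}_G(e)$, which is exactly (\ref{eq:orthog}). One direction is immediate from the above; for the converse I start from a $\Psi$ in the left-hand intersection that is not in $\mathrm{Stab}_G(e)$, set $\Theta_1:=\Psi$ and $\Theta_2:=\mathrm{Id}$, and read off a violating pair of cells — here I must check that these two cells are genuinely distinct cells of the trade, which is where I use $\Psi\notin\mathrm{Stab}_G(e)$ to force at least one of the row/column coordinates to differ; and I need that they differ in \emph{both} coordinates rather than one, which follows because two distinct entries of a Latin square agree in at most one coordinate (so if they agreed in the row they could not also agree in the symbol, etc.), exactly the style of argument already used in the proof of Theorem \ref{proposition_aut} and Lemma \ref{lemma_atop}.

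The step I expect to be the main obstacle is the bookkeeping that ensures the two partial Latin squares $T_{\tau,G}$ and $T'_{\tau,G}$ are indexed by the same set of cells in a $G$-equivariant way, so that applying a single $\Theta'\in G$ moves a cell of $T_{\tau,G}$ and the ``same'' cell of $T'_{\tau,G}$ in lockstep; this is implicit in the construction (both orbits are taken under the same $G$, and $e$ and $e^{\ast}$ share their first two coordinates), but it has to be stated carefully so that the phrase ``$T_{\tau,G}[i,j]=T_{\tau,G}[i',j']$ and $T'_{\tau,G}[i,j]=T'_{\tau,G}[i',j']$'' can be legitimately rewritten with a single group element $\Psi=\Theta_2^{-1}\Theta_1$ acting on both $e$ and $e^{\ast}$. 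Once that correspondence is nailed down, the remaining manipulations are the same short ``agree in at most one coordinate'' arguments already appearing in the paper, together with the translation, via (\ref{equation_composition}), between the conditions $f_{\pi(i)}(e_{\pi(i)})=e_i$ and membership in the various one-coordinate stabilizers.
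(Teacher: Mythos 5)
Your proposal is correct and follows essentially the same route as the paper: both translate a violating pair of cells into a single element $\Psi=\Theta_2^{-1}\Theta_1$ lying in $\mathrm{Stab}^{\mathrm{sym}}_G(e)\cap\mathrm{Stab}^{\mathrm{sym}}_G\left((e_1,e_2,f_{\pi(3)}(e_{\pi(3)}))\right)\setminus\mathrm{Stab}_G(e)$ and back again, using the ``two distinct entries agree in at most one coordinate'' argument to force distinct rows and columns. The lockstep cell-correspondence you flag as the delicate point is exactly the step the paper itself uses implicitly (its violating $T'$-entries are the images of $(e_1,e_2,f_{\pi(3)}(e_{\pi(3)}))$ under the same group elements, sharing their first two coordinates with the corresponding images of $e$), so your argument is in line with the published proof.
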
 
 
\begin{proof}
First suppose that (\ref{eq:orthog}) is true. 
Let $\Theta_1:=\left(g_1,g_2,g_3;\rho\right)$ and $\Theta_2:=\left(g'_1,g'_2,g'_3;\rho'\right)$ be two autoparatopisms in $G$ such that 

\[g_{\rho(1)}\left(e_{\rho(1)}\right)\neq g'_{\rho'(1)}\left(e_{\rho'(1)}\right),\]
\[g_{\rho(2)}\left(e_{\rho(2)}\right)\neq g'_{\rho'(2)}\left(e_{\rho'(2)}\right)\]
and 

\[g_{\rho(3)}\left(e_{\rho(3)}\right)= g'_{\rho'(3)}\left(e_{\rho'(3)}\right).\]
Then, 

  \[\Theta_2^{-1}\Theta_1\in \mathrm{Stab}^{\mathrm{sym}}_G(e)\setminus \mathrm{Stab}_G(e).\]
From (\ref{eq:orthog}), we have that 

\[\Theta_2^{-1}\Theta_1\not\in
\mathrm{Stab}^{\mathrm{sym}}_G((e_1,e_2,f_{\pi(3)}(e_{\pi(3)}))).\]
Hence, $g_{\rho(3)}f_{\pi(\rho(3))}(e_{\pi(\rho(3))})\neq g'_{\rho(3)}f_{\pi(\rho(3))}(e_{\pi(\rho(3))})$. Thus, 
the Latin bitrade $(T_{\tau,G}, T'_{\tau,G})$ is orthogonal. 

Conversely, suppose the existence of an autoparatopism 

\[(g_1,g_2,g_3;\rho)\in \left(\mathrm{Stab}^{\mathrm{sym}}_G(e)\cap \mathrm{Stab}^{\mathrm{sym}}_G((e_1,e_2,f_{\pi(3)}(e_{\pi(3)})))\right)\setminus \mathrm{Stab}_G(e).\]
Since $(g_1,g_2,g_3;\rho)\in \mathrm{Stab}^{\mathrm{sym}}_G(e)$, we have that $g_{\rho(3)}(e_{\rho(3)})=e_3$. As a consequence, since $(g_1,g_2,g_3;\rho)\not\in \mathrm{Stab}_G(e)$ and every two distinct entries in a Latin trade agree in at most one coordinate, it must be that $g_{\rho(1)}(e_{\rho(1)})\neq e_1$ and $g_{\rho(2)}(e_{\rho(2)})\neq e_2$. Further,  since 
$(g_1,g_2,g_3;\rho)\in \mathrm{Stab}^{\mathrm{sym}}_G((e_1,e_2,f_{\pi(3)}(e_{\pi(3)})))$, we have that

\[g_{\rho(3)}f_{\pi(\rho(3))}(e_{\pi(\rho(3))})=f_{\pi(3)}(e_{\pi(3)}).\]

In summary, 
$(e_1,e_2,e_3)$ and $(g_{\rho(1)}(e_{\rho(1)}), 
g_{\rho(2)}(e_{\rho(2)}),e_3)$ are distinct entries of 
$T_{\tau,G}$ while 
$(e_1,e_2,f_{\pi(3)}(e_{\pi(3)}))$ and $(g_{\rho(1)}(e_{\rho(1)}), 
g_{\rho(2)}(e_{\rho(2)}),f_{\pi(3)}(e_{\pi(3)}))$ are distinct entries of 
$T'_{\tau,G}$ containing the same symbol. 
This 
contradicts orthogonality. 
\end{proof}

\vspace{0.5cm}

Next, we characterize those intermediate subgroups between $G$ and $\mathrm{Apar}(Q)$ under whose action the Latin trade $T_{\tau,G}$ is a block.

\vspace{0.15cm}

\begin{lemma}
\label{lemma_block}
Under the assumptions of Theorem \ref{proposition_aut}, let $B\leq \mathrm{Apar}(Q)$ be such that $G\leq B$. If we denote $S_B:=\mathrm{Stab}_B(e)$, then the Latin trade $T_{\tau,G}$ is a block under the action of $B$ if and only if $S_BG= GS_B$. 
\label{blockproof}
\end{lemma}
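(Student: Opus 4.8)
The plan is to unpack what it means for $T_{\tau,G}$ to be a block under the action of $B$, translate it into a statement about the orbit $Ge$ under the action of $B$ on $\mathrm{Ent}(L(Q))$, and then use the orbit--stabilizer correspondence to rephrase this as a condition on cosets of $S_B$ and $G$ inside $B$. Recall from the proof of Theorem \ref{proposition_aut} that $\mathrm{Ent}(T_{\tau,G})=\{\Theta'(e)\mid \Theta'\in G\}$ is exactly the orbit of $e$ under $G$, and that the fibres of the map $\Theta'\mapsto\Theta'(e)$ are the left cosets of $\mathrm{Stab}_G(e)$ in $G$. Saying that $T_{\tau,G}$ is a block under $B$ means: for every $\Psi\in B$, either $\Psi(\mathrm{Ent}(T_{\tau,G}))=\mathrm{Ent}(T_{\tau,G})$ or $\Psi(\mathrm{Ent}(T_{\tau,G}))\cap\mathrm{Ent}(T_{\tau,G})=\emptyset$; equivalently, $\Psi(Ge)\cap Ge\neq\emptyset\implies \Psi(Ge)=Ge$.

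The key step is the following dictionary. For $\Psi\in B$, we have $\Psi(Ge)\cap Ge\neq\emptyset$ iff there exist $\Theta_1,\Theta_2\in G$ with $\Psi\Theta_1(e)=\Theta_2(e)$, i.e. $\Theta_2^{-1}\Psi\Theta_1\in S_B$, i.e. $\Psi\in \Theta_2 S_B\Theta_1^{-1}\subseteq GS_BG$; so the set of $\Psi\in B$ meeting the orbit nontrivially is exactly $GS_BG$. On the other hand, $\Psi(Ge)=Ge$ holds iff $\Psi$ permutes the fibre-set, which (since $Ge$ is a single $G$-orbit) happens iff $\Psi\in GS_B$ — indeed $\Psi(Ge)\subseteq Ge$ forces $\Psi(e)=\Theta(e)$ for some $\Theta\in G$, so $\Psi\in G\cdot\mathrm{Stab}_B(e)=GS_B$, and conversely every element of $GS_B$ stabilises the $G$-orbit of $e$. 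Hence $T_{\tau,G}$ is a block under $B$ if and only if $GS_BG=GS_B$. Now $GS_B\subseteq GS_BG$ always, and $GS_BG=GS_B$ is equivalent to $S_BG\subseteq GS_B$, which — since $G$ and $S_B$ are subgroups and $1\in G\cap S_B$ — is equivalent to $S_BG=GS_B$ (apply the containment to inverses to get the reverse inclusion). This gives the stated criterion.

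The main obstacle I expect is being careful with the two different "block" conditions — $\Psi(Ge)\subseteq Ge$ versus $\Psi(Ge)\cap Ge\neq\emptyset$ — and verifying that, for the action of a group $B\geq G$ on the orbit $Ge$, the setwise stabiliser of $Ge$ in $B$ is precisely $GS_B$; this is where the fact that $Ge$ is a single $G$-orbit (not just any subset) is essential, and it is the only place where more than formal manipulation is needed. Everything else is the standard orbit--stabiliser bookkeeping and the elementary group-theoretic fact that, for subgroups $G$ and $S_B$ of $B$, the product $GS_B$ is itself a subgroup precisely when $GS_B=S_BG$, together with the double-coset reformulation $GS_BG=GS_B\iff S_BG=GS_B$.
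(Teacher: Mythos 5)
Your double-coset bookkeeping is fine as far as it goes: the identity $\{\Psi\in B\mid \Psi(Ge)\cap Ge\neq\emptyset\}=GS_BG$, the inclusion $\{\Psi\in B\mid \Psi(Ge)=Ge\}\subseteq GS_B$, and the equivalence $GS_BG=GS_B\iff S_BG=GS_B$ are all correct, and together they already give the \emph{only if} direction (block $\Rightarrow S_BG=GS_B$). The gap is in the \emph{if} direction, and it sits exactly at the point you flag at the end and then treat as a routine verification: the claim that the setwise stabiliser of $Ge$ in $B$ is \emph{precisely} $GS_B$, i.e.\ that ``every element of $GS_B$ stabilises the $G$-orbit of $e$''. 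That inclusion is not a general fact about an overgroup acting on a $G$-orbit; it is logically equivalent to the very condition $S_BG\subseteq GS_B$ whose sufficiency you are proving. Indeed, for $\sigma\in S_B$ and $g\in G$ one has $\sigma g(e)\in Ge$ if and only if $\sigma g\in GS_B$, so ``$GS_B$ preserves $Ge$ setwise'' holds exactly when $S_BG\subseteq GS_B$. Unconditionally it is false: take $B=S_3$ acting on $\{1,2,3\}$, $G=\langle(1\,2)\rangle$, $e=1$, so that $Ge=\{1,2\}$ and $S_B=\langle(2\,3)\rangle$; then $(2\,3)\in GS_B$ but $(2\,3)(Ge)=\{1,3\}\neq Ge$. (The paper's Example \ref{example_Mersenne} exhibits the same phenomenon in situ: $\mathrm{Stab}_{\mathrm{Aut}(Q)}(e)$ does not preserve $T_{\tau,G}$ as a set.) So, as written, the sufficiency direction assumes what it must prove.

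The repair is short, and it is precisely where the hypothesis gets used. Assume $S_BG=GS_B$ and let $\Psi\in B$ satisfy $\Psi(Ge)\cap Ge\neq\emptyset$; then $\Psi\in GS_BG=GS_B$, say $\Psi=\Theta\sigma$ with $\Theta\in G$ and $\sigma\in S_B$. For each $g\in G$ we have $\sigma g\in S_BG=GS_B$, say $\sigma g=g'\sigma'$ with $g'\in G$ and $\sigma'\in S_B$, whence $\Psi g(e)=\Theta\sigma g(e)=\Theta g'(e)\in Ge$. Thus $\Psi(Ge)\subseteq Ge$, and since $\Psi$ is a bijection and $Ge$ is finite, $\Psi(Ge)=Ge$. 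With this two-line argument inserted, your proof is complete, and in substance it is the same coset-shuffling as the paper's own proof, organised through the orbit--stabiliser/double-coset dictionary rather than the paper's element-by-element manipulation.
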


\begin{proof}
Suppose firstly that 
$S_BG= GS_B$. 
Let $\Theta'\in B$ be such that 
$\Theta'(e)\in T_{\tau,G}$. 
Then, there exists an autoparatopism $\Theta''\in G$ such that $\Theta'(e) = \Theta''(e)$. Thus, ${\Theta''}^{-1}\Theta'\in S_B$. Letting ${\Theta''}^{-1}\Theta'=\Theta_1$, we have 
$\Theta'=\Theta''\Theta_1$. 

 Next, let $\Theta_2\in G$. 
Since $S_BG=GS_B$, $\Theta_2^{-1}\Theta_1^{-1}\Theta_2\in S_B$. Thus, 
\begin{eqnarray*}
\Theta_2^{-1}\Theta_1^{-1}\Theta_2(e) & = &   e \\
\Leftrightarrow \  \Theta'\Theta_2(\Theta_2^{-1}\Theta_1^{-1}\Theta_2(e)) & = &  \Theta'\Theta_2(e) \\
\Leftrightarrow \  
 \Theta''\Theta_1\Theta_2(
\Theta_2^{-1}\Theta_1^{-1}\Theta_2(e)) & = &  \Theta'\Theta_2(e)  \\
\Leftrightarrow \  
\Theta''\Theta_2(e) & = &  \Theta'\Theta_2(e).   
\end{eqnarray*}
Hence, since $\Theta''\Theta_2\in G$, we have that $\Theta'\Theta_2(e)\in T_{\tau,G}$.

In summary, whenever $\Theta'\in B$ satisfies 
$\Theta'(e)\in T_{\tau,G}$, we have that $\Theta'\Theta_2(e)\in T_{\tau,G}$,
for each $\Theta_2\in G$. 
Thus, the Latin trade $T_{\tau,G}$ is a block under the action of $B$. 

Conversely, suppose that 
the Latin trade $T_{\tau,G}$ is a block under the action of $B$.
Let $\Theta_1\in S_B$ and $\Theta_2\in  G$. 
By the definition of
a block, since $\Theta_1(e)=e$, we have that $\Theta_1(\Theta_2(e))\in T_{\tau,G}$ and thus 
$\Theta_1(\Theta_2(e))
=\Theta_3(e)$ for some $\Theta_3\in G$. 
In turn,
$\Theta_3^{-1}\Theta_1\Theta_2\in S_B$.
Thus $\Theta_1\Theta_2=\Theta_3\Theta_4$ for some $\Theta_4\in S_B$. 
It follows that 
$S_BG\subseteq GS_B$. 
By elementary group theory,
$|S_BG|=|GS_B|$, so we have 
that 
$S_BG= GS_B$. 
\end{proof}

\vspace{0.5cm}

We have already indicated that our proposal generalizes the constructive method described in Theorem \ref{drapl}. More precisely, if the subgroup $G\leq \mathrm{Apar}(Q)$ constitutes indeed a subgroup in the automorphism group $\mathrm{Aut}(Q)$, then Theorem \ref{drapl} and Theorem \ref{proposition_aut} can be identified under certain conditions, which we show in the next result.

\vspace{0.15cm}

\begin{theorem}\label{theorem_generalization}
Let $G\leq \mathrm{Aut}(Q)$ and $\tau:=(e,\alpha,\overline{\alpha})\in \mathrm{Ent}(L(Q))\times G\times G$ be such that 
\begin{enumerate}
    \item[{\rm (C$1'$)}] $\alpha\in \mathrm{Stab}^{\mathrm{row}}_G(e)\setminus \mathrm{Stab}^{\mathrm{col}}_G(e)$;
    \item[{\rm (C$2'$)}] $\overline{\alpha}\in\mathrm{Stab}^{\mathrm{col}}_G(e)$; and
    \item[{\rm (C$3'$)}] $\overline{\alpha}^{-1}\alpha\in \mathrm{Stab}^{\mathrm{sym}}_G(e)$.
\end{enumerate}

Suppose in turn that
\begin{enumerate}
\item[{\rm (B1)}]   $\mathrm{Stab}^{\mathrm{row}}_G(e)=<\alpha>$;
\item[{\rm (B2)}]   $\mathrm{Stab}^{\mathrm{col}}_G(e)=<\overline{\alpha}>$;
\item[{\rm (B3)}]   $\mathrm{Stab}^{\mathrm{sym}}_G(e)=<\overline{\alpha}^{-1}\alpha>$;
\item[{\rm (B4)}]   $|<\alpha>\cap <\overline{\alpha}>|=1$.
\end{enumerate}

Let $a:=\alpha^{-1}$, $b:=\overline{\alpha}$, $c:=\overline{\alpha}^{-1}\alpha$,
$A:=<a>$, $B:=<b>$ and $C:=<c>$.
Then, the Latin bitrade $(T_{\tau,G},\, T'_{\tau,G})$ described as in Theorem \ref{proposition_aut} is isotopic to the Latin bitrade $(T^\circ,\, T^*)$ described as in Theorem \ref{drapl} via the pair of isotopisms $(\Theta,\, \Theta')$ that is defined as follows.

\[\Theta g(e):=(gA,gB,gC)\]
and

\[\Theta' g(e):=(gA,gB,ga^{-1}C),\]
for all automorphism $g\in G$.
\end{theorem}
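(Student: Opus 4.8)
The plan is to check first that the data $(G,a,b,c)$ meets the hypotheses of Theorem \ref{drapl}, and then to produce an explicit triple of bijections that simultaneously carries $T_{\tau,G}$ onto $T^\circ$ and $T'_{\tau,G}$ onto $T^*$, reading off the stated formulas for $\Theta$ and $\Theta'$ along the way.

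\textbf{Step 1: Theorem \ref{drapl} applies to $(G,a,b,c)$.} A direct computation gives $abc=\alpha^{-1}\overline{\alpha}\,\overline{\alpha}^{-1}\alpha=1$, which is (G1). None of $a,b,c$ is trivial: $a=\alpha^{-1}\neq 1$ since $\alpha\notin\mathrm{Stab}^{\mathrm{col}}_G(e)$ by (C$1'$); $c=\overline{\alpha}^{-1}\alpha\neq 1$ since otherwise $\overline{\alpha}=\alpha\in\mathrm{Stab}^{\mathrm{col}}_G(e)$ by (C$2'$), against (C$1'$); and $b=\overline{\alpha}\neq 1$ since otherwise $\alpha=\overline{\alpha}^{-1}\alpha=c$ would lie in $\mathrm{Stab}^{\mathrm{sym}}_G(e)$, hence (by the cancellation remark below) in $\mathrm{Stab}^{\mathrm{row}}_G(e)\cap\mathrm{Stab}^{\mathrm{sym}}_G(e)=\mathrm{Stab}_G(e)$, contradicting (C$1'$). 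For (G2) I use that $G$ consists of automorphisms and $e_1e_2=e_3$: if $g\in G$ fixes two of $e_1,e_2,e_3$ then left/right cancellation forces it to fix the third, so each of $\mathrm{Stab}^{\mathrm{row}}_G(e)\cap\mathrm{Stab}^{\mathrm{col}}_G(e)$, $\mathrm{Stab}^{\mathrm{row}}_G(e)\cap\mathrm{Stab}^{\mathrm{sym}}_G(e)$, $\mathrm{Stab}^{\mathrm{col}}_G(e)\cap\mathrm{Stab}^{\mathrm{sym}}_G(e)$ equals $\mathrm{Stab}_G(e)$. By (B1), (B2), (B4) this common value is $\langle\alpha\rangle\cap\langle\overline{\alpha}\rangle=\{1\}$, and rewriting the three intersections via (B1)--(B3) (noting $\langle a\rangle=\langle\alpha\rangle$, $\langle b\rangle=\langle\overline{\alpha}\rangle$, $\langle c\rangle=C$) gives $|A\cap B|=|A\cap C|=|B\cap C|=1$, i.e. (G2). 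Hence Theorem \ref{drapl} yields the Latin bitrade $(T^\circ,T^*)$ of size $|G|$; in particular $\mathrm{Stab}_G(e)=\{1\}$, matching the size $|G|/|\mathrm{Stab}_G(e)|$ of Theorem \ref{proposition_aut}.

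\textbf{Step 2: the relabeling isotopism.} Let $Q_i:=\{g(e_i)\mid g\in G\}$ denote the $G$-orbit of $e_i$, and define $f_1\colon Q_1\to G/A$, $f_2\colon Q_2\to G/B$, $f_3\colon Q_3\to G/C$ by $f_1(g(e_1)):=gA$, $f_2(g(e_2)):=gB$, $f_3(g(e_3)):=gC$. Since $g(e_1)=g'(e_1)$ iff $g^{-1}g'\in\mathrm{Stab}^{\mathrm{row}}_G(e)=\langle\alpha\rangle=A$, the map $f_1$ is a well-defined bijection; likewise $f_2$ and $f_3$, using $\mathrm{Stab}^{\mathrm{col}}_G(e)=B$ and $\mathrm{Stab}^{\mathrm{sym}}_G(e)=C$ from (B2), (B3). (If one wants bijections of the entire line sets, extend each $f_i$ arbitrarily; only the active lines of the bitrade are relevant.) Then $(f_1,f_2,f_3)$ is an isotopism.

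\textbf{Step 3: matching the entry sets.} Because each element of $G$ acts as an automorphism, $\mathrm{Ent}(T_{\tau,G})=\{(g(e_1),g(e_2),g(e_3))\mid g\in G\}$, and applying $(f_1,f_2,f_3)$ entrywise gives $\{(gA,gB,gC)\mid g\in G\}=\mathrm{Ent}(T^\circ)$; this is the stated formula $\Theta g(e)=(gA,gB,gC)$. Since the autoparatopism $\Theta$ of Theorem \ref{proposition_aut} is here the automorphism $\alpha$ (so $\pi=\mathrm{Id}$ and $f_{\pi(3)}(e_{\pi(3)})=\alpha(e_3)$), we have $\mathrm{Ent}(T'_{\tau,G})=\{(g(e_1),g(e_2),g\alpha(e_3))\mid g\in G\}$; applying $(f_1,f_2,f_3)$ and using $f_3(g\alpha(e_3))=(g\alpha)C=ga^{-1}C$ (recall $a^{-1}=\alpha$) yields $\{(gA,gB,ga^{-1}C)\mid g\in G\}=\mathrm{Ent}(T^*)$, which is the stated formula for $\Theta'$ read as the action of the same relabeling on the $g$-th entry of $T'_{\tau,G}$. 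Thus the pair $(\Theta,\Theta')$ described in the statement realizes the claimed isotopy of Latin bitrades. The only non-formal step is the verification of (G2) in Step 1, which is not subsumed by (B1)--(B4) and genuinely exploits that $G$ consists of automorphisms; the remainder is bookkeeping, the chief risk being inverse/coset slips in identifying $a=\alpha^{-1}$, $b=\overline{\alpha}$, $c=\overline{\alpha}^{-1}\alpha$ with the cosets of Theorem \ref{drapl}.
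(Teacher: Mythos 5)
Your proof is correct and follows essentially the same route as the paper's: the identifications $g(e_1)=g'(e_1)\Leftrightarrow gA=g'A$ (and likewise for columns and symbols via (B1)--(B3)), together with the triviality of $\mathrm{Stab}_G(e)$ coming from (B4), are exactly what the paper uses to show that the entry map $g(e)\mapsto(gA,gB,gC)$ is a well-defined bijection preserving rows, columns and symbols, and your $\alpha=a^{-1}$ bookkeeping for carrying $T'_{\tau,G}$ onto $T^{\ast}$ matches the paper's final remark. Your Step 1, which verifies (G1), (G2) and $a,b,c\neq 1$ so that Theorem \ref{drapl} genuinely applies (using that automorphisms fixing two coordinates of an entry must fix the third), is a check the paper leaves implicit; it is correct and a worthwhile addition rather than a change of method.
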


\begin{proof}
First, suppose that
$\Theta g_1(e)=\Theta g_2(e)$, for some $g_1,g_2\in G$.
Then $g_2^{-1}g_1\in \mathrm{Stab}_G(e)$. From (B4),
$g_1=g_2$. Thus, $\Theta$ is an injection.
Since $|T_{\tau,G}|=|T^{\circ}|=|G|$ by Theorem \ref{drapl} and Theorem \ref{proposition_aut}, $\Theta$ is in turn a bijection.

We next show that $\Theta$ preserves rows, columns and entries. To this end, suppose that both entries
$g_1(e)$
and
$g_2(e)$
share the same row in $T_{\tau,G}$, for some $g_1, g_2\in G$.
Then, $g_2^{-1}g_1\in \mathrm{Stab}^{\mathrm{row}}_G(e)=A$, and
thus, $g_1A=g_2A$. Hence,
$\Theta g_1(e)$ and
$\Theta g_2(e)$
lie in the same row of $T^{\circ}$.
Similarly,
if two entries in $T_{\tau,G}$ share the same column (or entry), then
this property is preserved under the map $\Theta$.
Finally, since $\alpha=a^{-1}$, it follows similarly that $\Theta'(T'_{\tau,G})=T^{\ast}$.
\end{proof}

\vspace{0.5cm}

Let us finish this section with some illustrative examples.

\vspace{0.15cm}

\begin{example}\label{example_atop1} Let $Q$ be the additive group $((\mathbb{Z}_2)^2,+)$, whose Cayley table is the Latin square

\[\begin{array}{|c|c|c|c|} \hline
0 & 1 & 2 & 3\\ \hline
1 & 0 & 3 & 2\\ \hline
2 & 3 & 0 & 1\\ \hline
3 & 2 & 1 & 0\\ \hline
\end{array}\]
The autotopism group $\mathrm{Atop}(Q)$ has size $96$. It is generated, for example, by the automorphism $(123)$ and the autotopism $((01),\,(0213),\,(0312))$.

Now, we consider the subgroup $G\leq \mathrm{Atop}(Q)$ of size $12$ that is generated by the autotopisms

\[\Theta:=((123),(013),(013)) \hspace{1cm}\text{and}\hspace{1cm} \overline{\Theta}:= ((012),(132),(012)).\]
Conditions (C1)--(C3) in Theorem \ref{proposition_aut} hold and hence, we can construct the Latin bitrade $(T_{\tau,G},\,T'_{\tau,G})$, with $e:=(0,0,0)\in\mathrm{Ent}(L(Q))$ and $\tau:=(e,\Theta,\overline{\Theta})$. Since $\mathrm{Stab}_G(e)=\{\mathrm{Id}\}$, we have that $|T_{\tau,G}|=|G|=12$. Even more, since $|\mathrm{Stab}^{\mathrm{row}}_G(e)|=|\mathrm{Stab}^{\mathrm{col}}_G(e)|=|\mathrm{Stab}^{\mathrm{sym}}_G(e)|=3$, Lemma \ref{lemma_atop} ensures that both Latin trades $T_{\tau,G}$ and $T'_{\tau,G}$ are $3$-homogeneous. Their respective $12$ entries are highlighted in bold type within $L(Q)$ as follows.

\[\begin{array}{|c|c|c|c|} \hline
{\bf 0_1} & {\bf 1_3} & 2 & {\bf 3_0}\\ \hline
{\bf 1_2} & 0 & {\bf 3_1} & {\bf 2_3}\\ \hline
{\bf 2_0} & {\bf 3_2} & {\bf 0_3} & 1\\ \hline
3 & {\bf 2_1} & {\bf 1_0} & {\bf 0_2}\\ \hline
\end{array}\]

It is orthogonal from Lemma \ref{lemma_Autot_Orthog}, because $\mathrm{Stab}^{\mathrm{sym}}_G(e)\cap \mathrm{Stab}^{\mathrm{sym}}_G((0,0,1))=\mathrm{Stab}_G(e)=\left\{\mathrm{Id}\right\}$. 
\hfill $\lhd$
\end{example}

\vspace{0.5cm}

\begin{example}\label{example_atop2}  Let $Q$ be the totally symmetric group $(({\mathbb Z}_2)^3,+)$, whose  Cayley table is the Latin square

$$L(Q)\equiv\begin{array}{|c|c|c|c|c|c|c|c|}
\hline
0 & 1 & 2 & 3 & 4 & 5 & 6 & 7 \\
\hline
1 & 0 & 3 & 2 & 5 & 4 & 7 & 6 \\
\hline
2 & 3 & 0 & 1 & 6 & 7 & 4 & 5 \\
\hline
3 & 2 & 1 & 0 & 7 & 6 & 5 & 4 \\
\hline
 4 & 5 & 6 & 7 & 0 & 1 & 2 & 3 \\
\hline
 5 & 4 & 7 & 6 & 1 & 0 & 3 & 2\\
\hline
6 & 7 & 4 & 5 & 2 & 3 & 0 & 1\\
\hline
7 & 6 & 5 & 4 & 3 & 2 & 1 & 0 \\
\hline
\end{array}.$$
The automorphism group $\mathrm{Aut}(Q)$ has size $168$. It is generated, for example, by the automorphisms $(1735)(46)$ and $(1736452)$. Furthermore, the autotopism group $\mathrm{Atop}(Q)$ has size $10752$. It is generated, for example, by the autotopisms

\[\left((0265)(1374),\, (0573)(1462),\, (0716)(23) \right)\hspace{0.5cm}\text{and}\hspace{0.5cm} \left((0526713),\, (0361542),\,(0647251) \right).\]
Now, let us consider the subgroup $G\leq \mathrm{Atop}(Q)$ of size $32$ that is generated by the autotopisms

\[\Theta:=\left((1643)(27),
(0621)(3745),(0621)(3745)\right)\]
and

\[\overline{\Theta}:=\left((0674)(1532),(1346)(27),(0674)(1532)\right).\]

Conditions (C1)--(C3) in Theorem \ref{proposition_aut} hold and hence, we can construct the Latin bitrade $(T_{\tau,G},\,T'_{\tau,G})$, with  $e:=(0,0,0)$ and $\tau:=(e,\Theta,\overline{\Theta})$. 
Since $\mathrm{Stab}_G(e)=\{\mathrm{Id}\}$, we have that $|T_{\tau,G}|=|G|=32$. 

Even more, since $|\mathrm{Stab}^{\mathrm{row}}_G(e)|=|\mathrm{Stab}^{\mathrm{col}}_G(e)|=|\mathrm{Stab}^{\mathrm{sym}}_G(e)|=4$, Lemma \ref{lemma_atop} ensures that both Latin trades $T_{\tau,G}$ and $T'_{\tau,G}$ are $4$-homogeneous. Their respective $32$ entries are highlighted in bold type within $L(Q)$ as follows.

$$\begin{array}{|c|c|c|c|c|c|c|c|}
\hline
{\bf 0_6} & {\bf 1_0} & {\bf 2_1} & 3 & 4 & 5 & {\bf 6_2} & 7 \\
\hline
1 & {\bf 0_3} & {\bf 3_2} & {\bf 2_4} & 5 & {\bf 4_0} & 7 & 6 \\
\hline
2 & {\bf 3_7} & 0 & 1 & 6 & {\bf 7_4} & {\bf 4_5} & {\bf 5_3} \\
\hline
3 & 2 & {\bf 1_5} & 0 & {\bf 7_1} & {\bf 6_7} & {\bf 5_6} & 4 \\
\hline
{\bf 4_0} & 5 & 6 & 7 & {\bf 0_3} & 1 & {\bf 2_4} & {\bf 3_2} \\
\hline
 5 & 4 & 7 & {\bf 6_2} & {\bf 1_0} & {\bf 0_6} & 3 & {\bf 2_1} \\
\hline
{\bf 6_7} & {\bf 7_1} & 4 & {\bf 5_6} & 2 & 3 & 0 & {\bf 1_5} \\
\hline
{\bf 7_4} & 6 & {\bf 5_3} & {\bf 4_5} & {\bf 3_7} & 2 & 1 & 0 \\
\hline
\end{array}$$

\vspace{0.25cm}

Lemma \ref{lemma_Autot_Orthog} implies that this Latin bitrade is not orthogonal, because $|\mathrm{Stab}_G(e)|=1$ and $\mathrm{Stab}^{\mathrm{sym}}_G(e)\cap \mathrm{Stab}^{\mathrm{sym}}_G((0,0,6))=\left\{\left((05)(14)(27)(36),\,(05)(14)(27)(36),\,\mathrm{Id}\right),\, \mathrm{Id}\right\}$. \hfill $\lhd$
\end{example}

\vspace{0.5cm}

The $4$-homogeneous Latin trade described in Example \ref{example_atop2} was first observed in \cite{Cavenagh2005a}, as a singular example not part of a general construction. Therein, it was noted that this Latin trade gives the minimum Hamming distance, 32, from $(({\mathbb Z}_2)^3,+)$ to a Latin square containing no $2\times 2$ subsquares. It is also a minimal Latin trade. Adding any new entry to the Latin trade yields a minimal defining set of the Latin square $(({\mathbb Z}_2)^3,+)$.

\vspace{0.5cm}

\begin{example}\label{example_apar} Since the group $Q$ described in Example \ref{example_atop2} is totally symmetric, we have that $|\mathrm{Apar}(Q)|=6\cdot |\mathrm{Atop}(Q)|=64512$. Moreover, the autoparatopism group $\mathrm{Apar}(Q)$ is generated by the autoparatopisms

\[\left((0265)(1374),\, (0573)(1462),\, (0716)(23);\,(123)\right)\]
and 

\[\left((0526713),\, (0361542),\,(0647251);(12) \right).\]

Let us consider again the entry $e=(0,0,0)\in \mathrm{Ent}(L(Q))$, as in Example \ref{example_atop2}, and let $G'\leq \mathrm{Apar}(Q)$ be the subgroup that is generated by the following three autoparatopisms. 

\begin{align*}
\Theta_1:& =\left( 
(0642)(1753), (26)(37),
(0642)(1753);(12)\right) \\
\Theta_2: & =\left((0213)(4657),(0213)(4657),(23)(67);\mathrm{Id}\right)\\
\Theta_3: & =\left((04)(15)(26)(37),(05)(14)(36)(27),(01)(23)(45)(67);\mathrm{Id}\right)
\end{align*}

Let $\tau':=(e,\Theta_1^2\Theta_2\Theta_1,\Theta_1)$. Since $\left|\mathrm{Stab}_{G'}(e)\right|=6$ and $|\mathrm{Stab}^{\mathrm{row}}_{G'}(e)|=|\mathrm{Stab}^{\mathrm{col}}_{G'}(e)|=|\mathrm{Stab}^{\mathrm{sym}}_{G'}(e)|=24$, then Lemma \ref{lemma_atop} implies that both Latin trades $T_{\tau',G'}$ and $T'_{\tau',G'}$ are $4$-homogeneous. Their respective $32$ entries are highlighted in bold type within $L(Q)$ as follows.

$$\begin{array}{|c|c|c|c|c|c|c|c|}
\hline
{\bf 0_6} & 1 & {\bf 2_0} & 3 & 4 & {\bf 5_2} & {\bf 6_5} & 7 \\
\hline
1 & {\bf 0_6} & 3 & {\bf 2_0} & {\bf 5_2} & 4 & 7 & {\bf 6_5} \\
\hline
2 & {\bf 3_0} & {\bf 0_7} & 1 & 6 & {\bf 7_5} & 4 & {\bf 5_3} \\
\hline
{\bf 3_0} & 2 & 1 & {\bf 0_7} & {\bf 7_5} & 6 & {\bf 5_3} & 4 \\
\hline
 {\bf 4_3} & 5 & 6 & {\bf 7_4} & 0 & {\bf 1_7} & 2 & {\bf 3_1} \\
\hline
 5 & {\bf 4_3} & {\bf 7_4} & 6 & {\bf 1_7} & 0 & {\bf 3_1} & 2 \\
\hline
{\bf 6_4} & 7 & {\bf 4_2} & 5 & {\bf 2_1} & 3 & 0 & {\bf 1_6} \\
\hline
7 & {\bf 6_4} & 5 & {\bf 4_2} & 3 & {\bf 2_1} & {\bf 1_6} & 0 \\
\hline
\end{array}$$
\hfill $\lhd$
\end{example}

\section{Further examples}

We finish our study by illustrating our constructive proposal with a pair of more comprehensive examples. They refer to the construction of Latin bitrades arisen from Mersenne primes and from quadratic orthomorphisms.

\subsection{Construction of Latin trades via Mersenne primes}

Let $p$ be a Mersenne prime; that is, $p=2^q-1$, where both $p$ and $q$ are primes. Assume also that $q\geq 3$.   
If ${\mathbb F}$ denotes the finite field of order $2^q$, then let $a$ be a primitive element in 
${\mathbb F}$; let $i$ be the unique solution
modulo $p$ 
to 
$a^{2i}+a^2+a+1=0$; and let $Q$ be the additive group in the same field. Suppose furthermore the existence of a positive integer $j$ such that $2^j+i-1$ is divisible by $p$.
 
Let $\omega,\,\alpha\in\mathrm{Aut}(Q)$ be defined by $\omega(x)=ax$ and
$\alpha(x)=x^{(p+1)/2}$. (Note here that $\alpha^{-1}(x)=x^2$ and hence, $\alpha^{-1}$ (and thus $\alpha$) is an automorphism of $Q$, because $(x+y)^2=x^2+y^2$ over ${\mathbb F}$.) In addition, observe that $\omega$ has order $p-1$;  $\alpha^{-1}$ (and hence 
$\alpha$) has order $q$, and 
$\alpha \omega^2  =  \omega\alpha$, 
composing from right to left. From \cite{CDH}, for example, we have that $\omega$ and $\alpha$ generate a group $G\leq \mathrm{Aut}\left(((\mathbb{Z}_2)^q,+)\right)$ of 
size $pq$. 

Next, let $e:=(1,a,a+1)\in\mathrm{Ent}(L)$. Then, $\mathrm{Stab}_G(e)=\{\mathrm{Id}\}$ and $|\mathrm{Stab}_G^{\mathrm{row}}(e)|=|\mathrm{Stab}_G^{\mathrm{col}}(e)|=|\mathrm{Stab}_G^{\mathrm{sym}}(e)|=q$. In addition, $\alpha\in\mathrm{Stab}_G^{\mathrm{row}}(e)\setminus\mathrm{Stab}_G^{\mathrm{col}}(e)$. Further, let $\overline{\alpha}=\omega^i\alpha^{-j}$. Since $2^j+i-1$ is divisible by $p$, we have that  

$$
\overline{\alpha}(a) 
 =    
\omega^i\alpha^{-j}(a) 
 =  
\omega^i(a^{2^j})
= a^{2^j+i}=a.$$
So, $\overline{\alpha}\in \mathrm{Stab}_G^{\mathrm{col}}(e)$. Moreover, 

\begin{align*}\alpha^{-1}\overline{\alpha}(a+1) &  
= \alpha^{-1}(\omega^i\alpha^{-j}(a+1)) = \alpha^{-1}(
\omega^i(
a^{2^{j}}+1)) 
= \alpha^{-1}(a^{2^{j}+i}+a^i) =\\
& = \alpha^{-1}(a+a^i)=a^2+a^{2i}.
\end{align*}

From the above assumption on $a$, we have that $\overline{\alpha}^{-1}\alpha\in\mathrm{Stab}_G^{\mathrm{sym}}(e)$. As a consequence, the next result follows readily from Theorem \ref{proposition_aut} and Lemma \ref{lemma_atop}, for the subgroup $G$ and the tuple $(e,\alpha,\overline{\alpha})$.

\vspace{0.25cm}

\begin{theorem}\label{theorem_Mersenne}
Let $p=2^q-1$ be 
a Mersenne prime
such that 
$a^{2i}+a^2+a+1=0$, for some positive integer $i$ and  
some primitive element
 $a$, over the field of order $2^q$. Suppose furthermore that there exists a positive integer $j$ such that 
 $2^j+i-1$ is divisible by $p$. 
Then, there exists a $q$-homogeneous minimal 
Latin trade in $(({\mathbb Z}_2)^q,+)$ of size 
$pq$. 
\end{theorem}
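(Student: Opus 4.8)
The plan is to apply Theorem \ref{proposition_aut} and Lemma \ref{lemma_atop} directly to the data that has already been assembled in the paragraphs preceding the statement, and then to separately verify the one property in the conclusion that those results do not hand us for free, namely minimality. First I would note that the subgroup $G=\langle\omega,\alpha\rangle\leq\mathrm{Aut}(Q)\leq\mathrm{Apar}(Q)$ of order $pq$, the entry $e=(1,a,a+1)$, and the pair $(\alpha,\overline{\alpha})$ with $\overline{\alpha}=\omega^i\alpha^{-j}$ have all been shown to satisfy conditions (C1), (C2), (C3): indeed $\alpha\in\mathrm{Stab}_G^{\mathrm{row}}(e)\setminus\mathrm{Stab}_G^{\mathrm{col}}(e)$, $\overline{\alpha}\in\mathrm{Stab}_G^{\mathrm{col}}(e)$, and the displayed computation $\alpha^{-1}\overline{\alpha}(a+1)=a^2+a^{2i}=a+(a+1)$ (using $a^{2i}+a^2+a+1=0$) together with $\alpha^{-1}\overline{\alpha}(1)=1$ and $\alpha^{-1}\overline{\alpha}(a)=a$ shows $\overline{\alpha}^{-1}\alpha\in\mathrm{Stab}_G^{\mathrm{sym}}(e)$. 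Hence Theorem \ref{proposition_aut} produces a Latin bitrade $(T_{\tau,G},T'_{\tau,G})$ in $L(Q)$ with $\tau=(e,\alpha,\overline{\alpha})$, and since $\mathrm{Stab}_G(e)=\{\mathrm{Id}\}$ its size is $|G|/|\mathrm{Stab}_G(e)|=pq$.

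Next I would invoke Lemma \ref{lemma_atop}: since $|\mathrm{Stab}_G^{\mathrm{row}}(e)|=|\mathrm{Stab}_G^{\mathrm{col}}(e)|=|\mathrm{Stab}_G^{\mathrm{sym}}(e)|=q$ and $|\mathrm{Stab}_G(e)|=1$, the Latin trade $T_{\tau,G}$ is $q$-homogeneous. So the only thing left to establish is that $T_{\tau,G}$ is minimal, i.e.\ that no proper nonempty sub-partial-Latin-square $U\subsetneq T_{\tau,G}$ is itself a Latin trade in $L(Q)$. My approach here is to exploit entry-transitivity: the group $G$ acts transitively on $\mathrm{Ent}(T_{\tau,G})$ (because $\mathrm{Stab}_G(e)$ is trivial and $|T_{\tau,G}|=|G|$), so if $U$ were a Latin trade contained in $T_{\tau,G}$ then the union of the $G$-images of $U$ would also be contained in $T_{\tau,G}$ and would be $G$-invariant, hence equal to all of $T_{\tau,G}$; one then wants to argue that the only way finitely many translates of a Latin trade fill out $T_{\tau,G}$ is if $U=T_{\tau,G}$ already, which fails in general, so a genuinely combinatorial argument specific to this construction is needed. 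Concretely I would use the coset description coming from Theorem \ref{theorem_generalization}: with $a:=\alpha^{-1}$, $b:=\overline{\alpha}$, $c:=\overline{\alpha}^{-1}\alpha$ and $A=\langle a\rangle$, $B=\langle b\rangle$, $C=\langle c\rangle$ all of order $q$ and pairwise trivially intersecting, $T_{\tau,G}$ is isotopic to the trade $T^\circ$ of Theorem \ref{drapl} with rows, columns, symbols indexed by $G/A$, $G/B$, $G/C$. A sub-trade $U$ of $T^\circ$ would then correspond to a nonempty set $S\subseteq G$ with the property that the multiset $\{gA:g\in S\}$, $\{gB:g\in S\}$, $\{gC:g\in S\}$ each have every value occurring $0$ or $q$ times (this is exactly condition ($3^\ast$) together with $q$-homogeneity of $T^\circ$), which forces $S$ to be a union of cosets of $A$, of $B$, and of $C$ simultaneously; since $\langle A,B\rangle=\langle a,b\rangle$ contains $c=b^{-1}a^{-1}$ (noting $abc=1$) one gets $\langle A,B,C\rangle=G$, and a standard argument shows the smallest nonempty subset of $G$ that is a union of left cosets of each of $A$, $B$, $C$ is $G$ itself — hence $S=G$ and $U=T^\circ$.

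The main obstacle I anticipate is precisely this last point: showing that no proper nonempty $S\subseteq G$ is simultaneously a union of left cosets of $A$, of $B$ and of $C$. This is where the primality of $p$ and $q$ (so that $|A|=|B|=|C|=q$ is prime and $G$ has order $pq$ with a normal Sylow $p$-subgroup $\langle\omega\rangle$) should be used: I would analyze the action of $A$, $B$, $C$ as order-$q$ subgroups and argue via the structure of $G$ as a semidirect product $\langle\omega\rangle\rtimes\langle\alpha\rangle$ that any subset closed under left multiplication by all three must be closed under left multiplication by the subgroup they generate, which is $G$. It is worth double-checking whether the correct notion here is left cosets or right cosets (Theorem \ref{drapl} uses left cosets $gA,gB,gC$, and the rows/columns/symbols of $T_{\tau,G}$ are read off from the $G$-orbit of $e$ via \eqref{equation_composition}, so care with the side of the action is the one genuinely delicate bookkeeping issue), but once that is pinned down the minimality follows. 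Everything else — (C1)–(C3), the size count, and the homogeneity — is an immediate citation of the earlier results applied to the already-verified data.
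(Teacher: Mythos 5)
Your first two paragraphs coincide with the paper's own proof: the paper's entire argument for this theorem is the verification, carried out in the preceding paragraphs, that $(e,\alpha,\overline{\alpha})$ with $G=\langle\omega,\alpha\rangle$ satisfies (C1)--(C3), that $\mathrm{Stab}_G(e)=\{\mathrm{Id}\}$ and $|\mathrm{Stab}^{\mathrm{row}}_G(e)|=|\mathrm{Stab}^{\mathrm{col}}_G(e)|=|\mathrm{Stab}^{\mathrm{sym}}_G(e)|=q$, followed by a citation of Theorem \ref{proposition_aut} and Lemma \ref{lemma_atop} to get size $pq$ and $q$-homogeneity. Up to that point your proposal is correct and is essentially the same route. (The paper, it should be said, offers no further argument for the word ``minimal'' beyond this citation, so your instinct that minimality is the part needing separate justification is reasonable.)

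The minimality argument you then sketch has a genuine gap. You claim that a subtrade $U$ of $T^{\circ}$ corresponds to a nonempty $S\subseteq G$ in which each coset $gA$, $gB$, $gC$ occurs $0$ or $q$ times, ``exactly condition ($3^{\ast}$) together with $q$-homogeneity of $T^{\circ}$.'' That is a misreading of ($3^{\ast}$): the condition constrains the rows, columns and symbols of $U$ against those of \emph{its own disjoint mate} $U'$, not against those of the ambient trade $T^{\circ}$. A Latin trade contained in $T_{\tau,G}$ is only required to have at least two entries in each nonempty row, column and symbol class; nothing forces it to use all $q$ entries of a row of $T_{\tau,G}$ (an intercalate sitting inside $T_{\tau,G}$, were one to exist, would be a proper subtrade meeting rows in exactly $2$ of their $q$ cells, and your reduction would not exclude it). Consequently the step ``$S$ is simultaneously a union of left cosets of $A$, $B$ and $C$, hence $S\langle A,B,C\rangle=S$ and $S=G$'' -- which is correct once that closure is granted -- never gets off the ground, and minimality is not established. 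A correct proof of minimality must use more than transitivity and the coset labelling (e.g.\ a combinatorial analysis of how a hypothetical proper subtrade could sit inside the $q$ entries of each row/column/symbol, exploiting that $q$ is prime and the specific action of $\omega$ and $\alpha$); as it stands, this part of your proposal does not work, and it is also the part the paper itself leaves unproved.
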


\vspace{0.2cm}

\begin{example}\label{example_Mersenne} The hypothesis of Theorem \ref{theorem_Mersenne} holds for $q=3$, $i=6$, $j=1$ and $a^3=a+1$. Let $Q$ be the additive group in the finite field of order $8$. It is isomorphic to the totally symmetric group $(({\mathbb Z}_2)^3,+)$, whose  Cayley table is the Latin square $L(Q)$ described in Example \ref{example_atop2}. Particularly, the elements $a$, $a^2$, $a^3=a+1$, $a^4=a^2+a$, $a^5=a^2+a+1$ and $a^6=a^2+1$ are respectively represented by $2$, $6$, $3$, $4$, $5$ and $7$. 

From Theorem \ref{theorem_Mersenne}, we can construct a $3$-homogeneous minimal Latin trade in $Q$ of size $21$. To this end, we consider the automorphisms $\omega(x)=ax$ and $\alpha(x)=x^4$ in $\mathrm{Aut}(Q)$, which are respectively represented by the permutations $(1263457)$ and $(246)(357)$. In addition, we define the automorphism $\overline{\alpha}(x)=\omega^6\alpha^{-1}(x)$ in $\mathrm{Aut}(Q)$, which is represented by the permutation
$(174)(356)$. Finally, we consider the entry $e:=(1,2,3)\in\mathrm{Ent}(L(Q))$. Our construction yields the following Latin bitrade $\left(T_{\tau,G},\,T'_{\tau,G}\right)$ in $L(Q)$, where $G:=\langle\,\omega,\,\alpha\,\rangle$ and $\tau:=(e,\alpha,\overline{\alpha})$. 

$$\begin{array}{|c|c|c|c|c|c|c|c|}
\hline
0 & 1 & 2 & 3 & 4 & 5 & 6 & 7 \\
\hline
1 & 0 & {\bf 3_5} & 2 & {\bf 5_7} & 4 & {\bf 7_3} & 6 \\
\hline
2 & 3 & 0 & {\bf 1_4} & 6 & {\bf 7_1} & {\bf 4_7} & 5 \\
\hline
3 & {\bf 2_6} & 1 & 0 & {\bf 7_2} & {\bf 6_7} & 5 & 4 \\
\hline
 4 & 5 & {\bf 6_3} & 7 & 0 & {\bf 1_6} & 2 & {\bf 3_1} \\
\hline
 5 & {\bf 4_2} & 7 & 6 & 1 & 0 & {\bf 3_4} & {\bf 2_3} \\
\hline
6 & 7 & 4 & {\bf 5_1} & {\bf 2_5} & 3 & 0 & {\bf 1_2} \\
\hline
7 & {\bf 6_4} & {\bf 5_6} & {\bf 4_5} & 3 & 2 & 1 & 0 \\
\hline
\end{array}$$

\vspace{0.25cm}

Lemma \ref{lemma_Autot_Orthog} implies that this Latin bitrade is orthogonal, because 

\[\mathrm{Stab}^{\mathrm{sym}}_G(e)\cap \mathrm{Stab}^{\mathrm{sym}}_G((1,2,5))=\{\mathrm{Id}\}=\mathrm{Stab}_G(e).\]
Furthermore, the subgroup $\mathrm{Stab}_{\aut(Q)}(e)\leq \aut(Q)$ has size four. It is generated by the automorphisms $(46)(57)$ and $(45)(67)$. Then, 

\[\left(|\mathrm{Aut}(Q)|\,/\,|\mathrm{Stab}_{\aut(Q)}(e)|\right)\,/\,|T_{\tau,G}|=2.\]
Indeed, under the action of the subgroup $G$, there are two disjoint and isomorphic copies of the Latin trade $T_{\tau,G}$. These turn out to be the original $T_{\tau,G}$ and its transpose, which arises, for instance, by replacing the  entry $e$ in the tuple $\tau$ by the entry $(1,3,2)$. (Note here that $\mathrm{Stab}_{\aut(Q)}((1,3,2))=\mathrm{Stab}_{\aut(Q)}(e)$.) Alternatively, if we add the generator $(12)(56)\in\mathrm{Aut}(Q)$ to the group $G$ to create a group $G'$ of order $42$, then the latter induces two orbits on the set 
$\mathrm{Ent}(L(T_{\tau,G}))=\{(i,j,i+j)\in \mathrm{Ent}(L(Q)) \mid 0\not\in \{i,j,i+j\}\}\subset\mathrm{Ent}(L(Q))$. One is the above Latin trade $T_{\tau,G}$. The other one is its transpose, which is obviously also a Latin trade.

Note that the subgroup $\mathrm{Stab}_{\aut(Q)}(e)\leq \mathrm{Aut}(Q)$ does not stabilize the Latin trade $T_{\tau,G}$ as a set, because $\omega$ maps, for instance, the entry $(2,6,4)\in\mathrm{Ent}(T_{\tau,G})$ to the entry $(0,4,6)\not\in\mathrm{Ent}(T_{\tau,G})$. Since the group $G$ is the stabilizer in $\mathrm{Aut}(Q)$ of the Latin trade $T_{\tau,G}$, the  latter is not a block in the action of $\aut(Q)$ on $L(Q)$. For instance, the automorphism $(1746325)\in\mathrm{Aut}(Q)$ maps the entry $e$ to the entry $(7,5,2)\not\in\mathrm{Ent}(T_{\tau,G})$.

\hfill $\lhd$
\end{example}

\subsection{Construction of Latin trades via quadratic orthomorphisms}\label{section:ort}

Theorem \ref{proposition_aut} can be implemented to construct Latin bitrades from quadratic orthomorphisms over finite fields. Recall here that an {\em orthomorphism} over a finite field ${\mathbb F}_q$, with $q$ a prime power, is a permutation
$\theta: {\mathbb F}_q\rightarrow {\mathbb F}_q$
 such that the
map 
$x\rightarrow \theta(x)-x$
 is also a permutation of 
 ${\mathbb F}_q$. It is {\em canonical} if 
$\theta(0)=0$, while it is {\em quadratic} if there are two constants   $a,b\in {\mathbb F}_q\setminus\{0\}$ such that $\theta(x)=ax$, whenever $x$ is a perfect square, and $\theta(x)=bx$, otherwise. (Observe that every quadratic orthomorphism is canonical.) Furthermore, for any given orthomorphism $\theta$ over $\mathbb{F}_q$, let $L(\theta)$ denote the Latin square of order $q$ such that

\[\mathrm{Ent}(L(\theta)):=\left\{\left(i,\,j,\,\theta(j-i)+i\right)\mid i,j\in\mathbb{F}_q\right\}.\]
Then, it is possible to define an automorphism $\omega\in\mathrm{Aut}(L(\theta))$ such that $\omega(i):=i+1$, for all $i\in\mathbb{F}_q$. 

\vspace{0.2cm}

\begin{theorem}{\rm \cite{Evans1, Wanless1}} \label{EW}
Two constants $a$ and $b$ define a quadratic orthomorphism as above if and only if 
$ab$ and $(a-1)(b-1)$ are non-zero squares in 
 ${\mathbb F}_q$. 
\end{theorem}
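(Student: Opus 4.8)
The plan is to prove Theorem~\ref{EW} by a direct counting/characterization argument relating the orthomorphism condition to the splitting behaviour of a quadratic polynomial over $\mathbb{F}_q$. First I would set up notation: let $S\subseteq \mathbb{F}_q^*$ denote the set of non-zero squares and $N$ the set of non-squares, so that $\theta(x)=ax$ on $S\cup\{0\}$ and $\theta(x)=bx$ on $N$ (recalling $\theta(0)=0$ since every quadratic orthomorphism is canonical). The map $x\mapsto\theta(x)$ is automatically a permutation provided $a,b\neq 0$, because multiplication by a non-zero constant permutes $S\cup\{0\}$ among themselves and $N$ among itself (here I use that $S$ is a subgroup of index $2$ in $\mathbb{F}_q^*$, so multiplication by any non-zero constant either fixes $S$ setwise or swaps $S$ with $N$, and in either case the image of $\theta$ is all of $\mathbb{F}_q$). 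Thus the whole content of ``$\theta$ is an orthomorphism'' is the single extra requirement that $\sigma(x):=\theta(x)-x$ is also a permutation of $\mathbb{F}_q$.

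Next I would analyze $\sigma$. On $S\cup\{0\}$ we have $\sigma(x)=(a-1)x$ and on $N$ we have $\sigma(x)=(b-1)x$. If either $a=1$ or $b=1$, then $\sigma$ collapses a set of size $>1$ to $0$ (or, more carefully, fails to be injective), so we may assume $a\neq 1$ and $b\neq 1$; this already forces $(a-1)(b-1)\neq 0$, establishing necessity of that part. Now $\sigma$ is injective on $S\cup\{0\}$ (image a coset-type set) and injective on $N$ separately; the only way it can fail to be a permutation is if the image of $S\cup\{0\}$ and the image of $N$ overlap. The image of $S$ under $\sigma$ is $(a-1)S$, which equals $S$ if $a-1\in S$ and equals $N$ if $a-1\in N$; similarly $\sigma(N)=(b-1)N$, which is $N$ if $b-1\in S$ and $S$ if $b-1\in N$. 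Since $\sigma(0)=0$, the image of $S\cup\{0\}$ is $(a-1)S\cup\{0\}$. Therefore $\sigma$ is a bijection if and only if $(a-1)S$ and $(b-1)N$ are disjoint, i.e. one of them is $S$ and the other is $N$. Unwinding the four cases: $(a-1)S=S$ and $(b-1)N=N$ happens iff $a-1\in S$ and $b-1\in S$; $(a-1)S=N$ and $(b-1)N=S$ happens iff $a-1\in N$ and $b-1\in N$. In both surviving cases $a-1$ and $b-1$ lie in the same coset, which is precisely the condition that $(a-1)(b-1)$ is a non-zero square. Conversely, if $(a-1)(b-1)$ is a non-zero square then $a-1,b-1$ are in the same coset and a quick check shows $\sigma$ is a bijection.

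It remains to bring in the condition on $ab$. Here I would observe that the argument so far only used $a,b\neq 0$ to guarantee $\theta$ itself is a permutation, and a finer look is needed: the problem is that for $\theta$ and $\sigma$ to \emph{both} be permutations, we must rule out the degenerate possibility that $\theta$ is not actually well-defined as a permutation even though each piece is injective --- but in fact $\theta(0)=0$ and the two pieces map $S\cup\{0\}\to(a\text{-coset})\cup\{0\}$ and $N\to(b\text{-coset})$, so $\theta$ is a permutation iff $aS$ and $bN$ are disjoint, which by the identical coset analysis holds iff $a$ and $b$ lie in the same coset of $S$, i.e. iff $ab\in S$. So the two conditions ``$ab\in S$'' and ``$(a-1)(b-1)\in S$'' are exactly the requirements that $\theta$ and $\theta-\mathrm{id}$ respectively are permutations. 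Combining the two paragraphs gives the equivalence. The main obstacle, and the one step I would write out most carefully, is the bookkeeping of which coset $aS$, $bN$, $(a-1)S$, $(b-1)N$ land in and verifying that ``disjoint images'' translates cleanly into ``the two constants are quadratic-residue-equivalent'' --- in particular making sure the $\{0\}$ from the square part is handled correctly and that no case is double-counted or missed.
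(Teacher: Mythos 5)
The paper does not prove this statement at all: Theorem~\ref{EW} is imported verbatim from Evans and Wanless (\cite{Evans1, Wanless1}), so there is no internal proof to compare against. Your argument is the standard one for this result and, as finally assembled, it is correct: writing $S$ for the non-zero squares and $N$ for the non-squares (implicitly $q$ odd, which is the right setting for quadratic orthomorphisms), $\theta$ is injective on $S\cup\{0\}$ and on $N$, so it is a permutation iff $aS\cap bN=\emptyset$ iff $a$ and $b$ lie in the same coset of $S$, i.e.\ $ab\in S$; likewise $\theta-\mathrm{id}$ is a permutation iff $a,b\neq 1$ and $(a-1)S\cap(b-1)N=\emptyset$, i.e.\ $(a-1)(b-1)\in S$; the conjunction of these two equivalences is exactly the theorem. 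The one thing you must fix is your first paragraph, which asserts that ``$\theta$ is automatically a permutation provided $a,b\neq 0$'' because multiplication by any non-zero constant ``fixes $S$ setwise or swaps $S$ with $N$, and in either case the image of $\theta$ is all of $\mathbb{F}_q$.'' That claim is false: if, say, $a\in S$ and $b\in N$, then $aS=S$ and $bN=S$, so the image of $\theta$ is $S\cup\{0\}$ and $N$ is missed entirely. Your third paragraph quietly contradicts and corrects this (it is precisely where the condition $ab\in S$ comes from), so the logical content of the proof is sound, but as written the proof contains a false intermediate statement; delete the ``automatic permutation'' claim and run the coset analysis for $\theta$ from the start, in parallel with the analysis you already do for $\theta-\mathrm{id}$.
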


\vspace{0.25cm}

\begin{example}\label{example_ort} The triple $(q,a,b)=(11,2,6)$ satisfies the hypothesis of Theorem \ref{EW}. Thus, the permutation

$$\theta:=(12)(36)(48)(5 \, 10)(79)$$
is a quadratic orthomorphism in $\mathbb{F}_{11}$. Then,

$$L(\theta)\equiv\begin{array}{|c|c|c|c|c|c|c|c|c|c|c|}
\hline
0 & 2 & 1 & 6 & 8 & 10 & 3 & 9 & 4 & 7 & 5  \\
\hline
6 & 1 & 3 & 2 & 7 & 9 & 0 & 4 & 10 & 5 & 8  \\
\hline
9 & 7 & 2 & 4 & 3 & 8 & 10 & 1 & 5 & 0 & 6  \\
\hline
7 & 10 & 8 & 3 & 5 & 4 & 9 & 0 & 2 & 6 & 1  \\
\hline
2 & 8 & 0 & 9 & 4 & 6 & 5 & 10 & 1 & 3 & 7  \\
\hline
8 & 3 & 9 & 1 & 10 & 5 & 7 & 6 & 0 & 2 & 4  \\
\hline
5 & 9 & 4 & 10 & 2 & 0 & 6 & 8 & 7 & 1 & 3  \\
\hline
4 & 6 & 10 & 5 & 0 & 3 & 1 & 7 & 9 & 8 & 2  \\
\hline
3 & 5 & 7 & 0 & 6 & 1 & 4 & 2 & 8 & 10 & 9  \\
\hline
10 & 4 & 6 & 8 & 1 & 7 & 2 & 5 & 3 & 9 & 0   \\
\hline
1 & 0 & 5 & 7 & 9 & 2 & 8 & 3 & 6 & 4 & 10   \\
\hline
\end{array}$$
The automorphism group $\mathrm{Aut}(L(\theta))$ is generated by the automorphisms $\alpha(x)=3x$ and $\overline{\alpha}(x)=5x-4$. Hence, $|\mathrm{Aut}(L(\theta))|=55$. Now, let $e:=(0,1,2)\in\mathrm{Ent}(L(Q))$ and let $G=\mathrm{Aut}(L(\theta))$. Conditions (C1)--(C3) in Theorem \ref{proposition_aut} hold and hence, we can construct the Latin bitrade $(T_{\tau,G},\,T'_{\tau,G})$, with $\tau:=(e,\alpha,\overline{\alpha})$. 

Since $\mathrm{Stab}_{G}(e)=\{\mathrm{Id}\}$ and $|\mathrm{Stab}^{\mathrm{row}}_{G}(e)|=|\mathrm{Stab}^{\mathrm{col}}_{G}(e)|=|\mathrm{Stab}^{\mathrm{sym}}_{G}(e)|=5$, then Lemma \ref{lemma_atop} implies that both Latin trades $T_{\tau,G}$ and $T'_{\tau,G}$ are $5$-homogeneous. They are highlighted in bold type within $L(\theta)$ as follows.

$$\begin{array}{|c|c|c|c|c|c|c|c|c|c|c|}
\hline
0 & {\bf 2_6} & 1 & {\bf 6_7} & {\bf 8_2} & {\bf 10_8} & 3 & 9 & 4 & {\bf 7_{10}} & 5  \\
\hline
6 & 1 & {\bf 3_7} & 2 & {\bf 7_8} & {\bf 9_3} & {\bf 0_9} & 4 & 10 & 5 & {\bf 8_0}  \\
\hline
{\bf 9_1} & 7 & 2 & {\bf 4_8} & 3 & {\bf 8_9} & {\bf 10_4} & {\bf 1_{10}} & 5 & 0 & 6  \\
\hline
7 & {\bf 10_2} & 8 & 3 & {\bf 5_9} & 4 & {\bf 9_{10}} & {\bf 0_5} & {\bf 2_0} & 6 & 1  \\
\hline
2 & 8 & {\bf 0_3} & 9 & 4 & {\bf 6_{10}} & 5 & {\bf 10_0} & {\bf 1_6} & {\bf 3_1} & 7  \\
\hline
8 & 3 & 9 & {\bf 1_4} & 10 & 5 & {\bf 7_{0}} & 6 & {\bf 0_1} & {\bf 2_7} & {\bf 4_2}  \\
\hline
{\bf 5_3} & 9 & 4 & 10 & {\bf 2_5} & 0 & 6 & {\bf 8_{1}} & 7 & {\bf 1_2} & {\bf 3_8}  \\
\hline
{\bf 4_9} & {\bf 6_4} & 10 & 5 & 0 & {\bf 3_6} & 1 & 7 & {\bf 9_2} & 8 & {\bf 2_3}  \\
\hline
{\bf 3_4} & {\bf 5_{10}} & {\bf 7_5} & 0 & 6 & 1 & {\bf 4_7} & 2 & 8 & {\bf 10_3} & 9  \\
\hline
10 & {\bf 4_5} & {\bf 6_0} & {\bf 8_6} & 1 & 7 & 2 & {\bf 5_8} & 3 & 9 & {\bf 0_4}   \\
\hline
{\bf 1_5} & 0 & {\bf 5_6} & {\bf 7_1} & {\bf 9_7} & 2 & 8 & 3 & {\bf 6_9} & 4 & 10   \\
\hline
\end{array}$$
\hfill $\lhd$
\end{example}

\vspace{0.25cm}

The previous example can be generalized as follows. Let $\theta$ be a quadratic orthomorphism in 
${\mathbb F}_q$
based on two constants $a$ and $b$ such that $ab$ and $(a-1)(b-1)$ are non-zero squares in 
 ${\mathbb F}_q$. In addition, let $e:=(0,1,a)\in \mathrm{Ent}(L(\theta))$ and let $G\leq\mathrm{Aut}(L(\theta))$ be subgroup that is generated by the automorphisms $\alpha(x)=bx/a$ and 
$\overline{\alpha}(x)=(b-1)(x-1)/(a-1)+1$, both of them in $\mathrm{Aut}(L(\theta))$. Conditions (C1)--(C3) in Theorem \ref{proposition_aut} are satisfied, so long as $a\neq b$. Hence, the following result holds. 


\vspace{0.1cm}

\begin{theorem}
Let $\theta$ be a quadratic orthomorphism in 
${\mathbb F}_q$
based on two distinct constants $a$ and $b$ such that $ab$ and $(a-1)(b-1)$ are non-zero squares in 
 ${\mathbb F}_q$.  
Let $m$ be the order of the group generated by $b/a$ and $(b-1)/(a-1)$ in the multiplicative group of 
${\mathbb F}_q$. 
Then, there is a Latin trade $T$ in  $L(\theta)$ of size $mq$. 
Moreover, there are 
$(q-1)/m$ disjoint trades in $L(\theta)$,  each one of them isotopic to $T$. 
\end{theorem}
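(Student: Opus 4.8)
The plan is to apply Theorem \ref{proposition_aut} and Lemma \ref{lemma_atop} with the explicit data $Q = (({\mathbb F}_q,+))$, $L = L(\theta)$, $G = \langle \alpha, \overline{\alpha}, \omega\rangle \leq \mathrm{Aut}(L(\theta))$ where $\alpha(x) = bx/a$, $\overline{\alpha}(x) = (b-1)(x-1)/(a-1)+1$, $\omega(x) = x+1$, and $e = (0,1,a)$, and then to count the disjoint isotopic copies using the full automorphism group. First I would verify carefully that $\alpha$ and $\overline{\alpha}$ genuinely lie in $\mathrm{Aut}(L(\theta))$: since multiplication by the non-zero square $b/a$ (and by $(b-1)/(a-1)$, also a non-zero square) permutes squares among themselves and non-squares among themselves, it commutes with the piecewise-linear $\theta$, so $x \mapsto cx$ with $c$ a non-zero square is an automorphism of $L(\theta)$; the affine shift in $\overline{\alpha}$ is absorbed by the translation automorphism $\omega$, which is clearly in $\mathrm{Aut}(L(\theta))$ by the defining formula $\theta(j-i)+i$. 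Then $\alpha$ fixes row $0$ (sends $i \mapsto (b/a)\cdot 0 = 0$? — more precisely, as an automorphism acting on the triple $(i,j,k)$ it multiplies all three coordinates by $b/a$, hence fixes row index $0$) but moves column $1$ to $b/a \neq 1$ since $a \neq b$, giving (C$1'$); $\overline{\alpha}$ fixes column index $1$ since $(b-1)(1-1)/(a-1)+1 = 1$, giving (C$2'$); and $\overline{\alpha}^{-1}\alpha$ must fix the symbol $a$, which is the content of the identity $\theta(1-0)+0 = \theta(1) = a$ together with a short computation showing $\overline{\alpha}^{-1}\alpha$ fixes $a$ as a symbol — this is the analogue of condition (C$3'$) and I would check it directly from the formulas for $\alpha$ and $\overline{\alpha}$.

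Next I would compute the stabilizer data. The key observation is that $\mathrm{Stab}_G(e)$ is trivial: an automorphism of the form $x \mapsto cx + d$ that fixes the full triple $(0,1,a)$ must fix $0$ (forcing $d = 0$) and fix $1$ (forcing $c = 1$). For the one-coordinate stabilizers, $\mathrm{Stab}_G^{\mathrm{row}}(e)$ consists of those $g \in G$ fixing row $0$, i.e. $g(x) = cx$ with $c$ ranging over a suitable subgroup; the point is that $\langle \alpha \rangle \leq \mathrm{Stab}_G^{\mathrm{row}}(e)$ and in fact equals it, with $|\langle\alpha\rangle|$ equal to the order of $b/a$ in the multiplicative group — but more care is needed because the \emph{row} stabilizer also sees $\overline{\alpha}$-type elements combined with translations. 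I would instead argue that each of $\mathrm{Stab}_G^{\mathrm{row}}(e)$, $\mathrm{Stab}_G^{\mathrm{col}}(e)$, $\mathrm{Stab}_G^{\mathrm{sym}}(e)$ has the same order $m$, where $m$ is the order of the subgroup of the multiplicative group generated by $b/a$ and $(b-1)/(a-1)$: the row stabilizer is $\{x \mapsto cx : c \in \langle b/a, (b-1)/(a-1)\rangle\}$ after noting that fixing row $0$ kills the translation part, and symmetrically for columns (fixing column $1$) and symbols (fixing symbol $a$), each being a conjugate of this cyclic-by-structure group under an appropriate affine change of variable. Granting this, Lemma \ref{lemma_atop} gives that $T := T_{\tau,G}$ is $m$-homogeneous, and Theorem \ref{proposition_aut} gives $|T| = |G|/|\mathrm{Stab}_G(e)| = mq$ once we establish $|G| = mq$ (the group generated by the $m$ scalar automorphisms and the $q$ translations, which is a semidirect product of order $mq$).

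Finally, for the "$(q-1)/m$ disjoint isotopic trades" claim, the plan is the same as in Example \ref{example_Mersenne}: enlarge $G$ to $G' = \langle G, \sigma\rangle$ for a well-chosen $\sigma \in \mathrm{Aut}(L(\theta))$ (e.g. a scalar $x \mapsto sx$ with $s$ a square generating the multiplicative group modulo $\langle b/a, (b-1)/(a-1)\rangle$), so that $|G'| = (q-1)q$ and $\mathrm{Stab}_{G'}(e)$ is trivial; then $|T_{\tau,G'}| $ would be $(q-1)q$, but $T_{\tau,G'}$ decomposes under the subgroup $G$ into $|G':G| = (q-1)/m$ cosets, each of which is a $G$-translate of $T$ and hence a partial Latin square of size $mq$ equal to $\Theta'(T_{\tau,G})$ for some $\Theta' \in G'$; these translates are pairwise disjoint since distinct cosets of $\mathrm{Stab}_{G'}(e)$ give distinct entries, and each is isotopic to $T$ (indeed they are all images of $T$ under elements of $\mathrm{Aut}(L(\theta))$, which are isomorphisms of $L(\theta)$, a fortiori isotopisms). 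Each translate is itself a Latin trade with disjoint mate the corresponding translate of $T'$. I would then identify the relevant ambient set $\mathrm{Ent}(L(\theta))$ restricted to the $G'$-orbit of $e$ — which should be $\{(i,j,\theta(j-i)+i) : j - i \text{ a square, } j-i \neq 0\}$ or similar — to make the decomposition into $(q-1)/m$ orbits transparent. The main obstacle is the careful determination that all three one-coordinate stabilizers have order exactly $m$ and that $\mathrm{Stab}_G(e)$ (and $\mathrm{Stab}_{G'}(e)$) is trivial; everything downstream is then a direct invocation of Theorem \ref{proposition_aut} and Lemma \ref{lemma_atop} plus elementary orbit counting. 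One should also double-check that conditions (C1)--(C3) hold precisely when $a \neq b$, which amounts to checking that $\alpha \notin \mathrm{Stab}_G^{\mathrm{col}}(e)$, i.e. $b/a \neq 1$.
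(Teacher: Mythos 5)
Your handling of the first claim follows the paper's own route (its preamble): check that $\alpha$, $\overline{\alpha}$ and the translations are automorphisms of $L(\theta)$, verify (C1)--(C3) at $e=(0,1,a)$ (which indeed needs exactly $a\neq b$), and read off the size and homogeneity from Theorem \ref{proposition_aut} and Lemma \ref{lemma_atop}; your coset decomposition of a larger orbit into disjoint translates of $T$ is also the same mechanism as the paper's appeal to Lemma \ref{blockproof}, and is sound once the stabilizer is trivial. One small repair: to get $|G|=mq$ you need all $q$ translations inside $G$, but $\langle\alpha,\overline{\alpha},\omega\rangle$ only contains translations by the subfield of ${\mathbb F}_q$ generated by $b/a$ and $(b-1)/(a-1)$; when $q$ is a proper prime power and these constants lie in a proper subfield your count $|G|=mq$ fails. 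The harmless fix is to include every translation $x\mapsto x+h$ (all of them are automorphisms of $L(\theta)$) among the generators.

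The genuine gap is in the second claim. You want $G'=\langle G,\sigma\rangle\le\mathrm{Aut}(L(\theta))$ of order $q(q-1)$ with $\sigma(x)=sx$ and $s$ a square. This is impossible: since $ab$ and $(a-1)(b-1)$ are squares, $b/a$ and $(b-1)/(a-1)$ are squares, so $\langle b/a,(b-1)/(a-1),s\rangle$ lies in the index-two subgroup of squares and $|G'|\le q(q-1)/2$; and you cannot instead take $s$ a non-square, because for $a\neq b$ the map $x\mapsto sx$ satisfies $\theta(sy)=s\theta(y)$ only when $s$ is a square, so it is not an automorphism of $L(\theta)$. The paper's own Example \ref{example_ort} makes this concrete: there $|\mathrm{Aut}(L(\theta))|=55=q(q-1)/2$, so no subgroup of order $q(q-1)$ exists at all. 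Consequently your orbit argument reaches only the cells whose difference $j-i$ lies in a square coset of $H$ and produces $(q-1)/(2m)$ disjoint copies of $T$, not $(q-1)/m$. The cells with non-square difference need a separate treatment, e.g.\ re-running Theorem \ref{proposition_aut} at a base entry $(0,t,bt)$ with $t$ a non-square, using $\alpha'(x)=(a/b)x$ and $\overline{\alpha}'(x)=(a-1)(x-t)/(b-1)+t$, and then proving that the resulting trades are isotopic to $T$ -- a nontrivial point your proposal does not address (and note that the paper's own proof leans on the assertion that the full affine group of order $q(q-1)$ sits inside $\mathrm{Aut}(L(\theta))$, which is exactly the step contradicted by its Example \ref{example_ort}; so this is the place where real extra work is required, and your write-up does not close it).
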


\begin{proof}
From the comments in the preamble, it remains to show that there are disjoint copies of $T$.  
Let $B\leq \mathrm{Aut}(L(\theta))$ be the affine group formed by automorphisms of the form 
$f(x)=gx+h$, where 
$g$ and $h$ are respective elements of the multiplicative and additive groups of  ${\mathbb F}_q$. Then, $|B|=q(q-1)$ and $\mathrm{Stab}_B(e)=\{\mathrm{Id}\}$. Hence, the result follows from Lemma \ref{blockproof}. 
\end{proof}

\vspace{0.1cm}

\backmatter

\bmhead{Acknowledgments}

The authors are very grateful to Professor Ian Wanless, for his assistance in the development of this study. Further, Falc\'on's work is partially supported by both the Research Project {\bf FQM-016} {\em ``Codes, Design, Cryptography and Optimization''}, from Junta de Andaluc\'\i a, and the Research Project {\bf TED2021-130566B-100}  {\em ``New graphical authentication schemes in Information Management Systems by means of fractal images based on Latin squares''}, from Ministry of Science and Innovation of the Government of Spain.

\section*{Statements and Declarations}

\begin{itemize}
\item {\bf Data availability:} The datasets generated during and/or analysed during the current study are available from the corresponding author on reasonable request.

\item {\bf Competing Interests:} Authors declare no competing interest.
\end{itemize}

\end{document}